\numberwithin{equation}{section}
\def\beq{\begin{eqnarray}}
\def\eeq{\end{eqnarray}}
\def\beqs{\begin{eqnarray*}}
\def\eeqs{\end{eqnarray*}}
\def\mz{{\mathbb Z}}
\def\mc{{\mathbb C}}
\def\dim{{\hbox{\rm dim}}}
\def\ad{{\hbox{\rm ad}}}
\newfont{\df}{eufm10}
\def\gim{\hbox{\bf gim}}
\def\sg{\textsf{g}}
\def\sh{\textsf{h}}
\def\cu{\mathcal{U}}
\def\cl{\mathcal{L}}
\def\ca{\mathcal{A}}
\def\mc{{\mathbb C}}
\def\deg{\hbox{\rm deg}}
\title[Representations for gim algebras]
{Finite-dimensional representations for a class of generalized intersection matrix Lie algebras}
\thanks{$^\dag$the corresponding author's email: xialimeng@ujs.edu.cn}
\author[Y. Gao]{Yun Gao$^1$}
\author[L. Xia]{Li-meng   Xia$^{2,\dag}$}
\date{}
\begin{document}
\maketitle

\centerline{$^1$Department of Mathematics and Statistics, York University,  Canada}

\centerline{$^2$Faculty of Science, Jiangsu University, P.R. China}

\def\abstractname{ABSTRACT}
\begin{abstract}
In this paper, we study a class of generalized intersection matrix Lie algebras $\gim(M_n)$, and prove that  its every finite-dimensional semi-simple quotient is  of type $M(n,{\bf a}, {\bf c},{\bf d})$. Particularly, any finite dimensional irreducible $\gim(M_n)$ module must be an irreducible module of $M(n,{\bf a}, {\bf c},{\bf d})$ and any finite dimensional irreducible $M(n,{\bf a}, {\bf c},{\bf d})$ module must be an irreducible module of $\gim(M_n)$.

\vskip3mm \noindent {\it Key Words}: intersection matrix algebras; irreducible modules; quotient algebras; affine Lie algebras.
\end{abstract}

\newtheorem{theo}{Theorem}[section]
\newtheorem{theorem}[theo]{Theorem}
\newtheorem{defi}[theo]{Definition}
\newtheorem{lemma}[theo]{Lemma}
\newtheorem{coro}[theo]{Corollary}
\newtheorem{proposition}[theo]{Proposition}
\newtheorem{remark}[theo]{Remark}

\setcounter{section}{0}

\section{Introduction}

In the early to mid-1980s, Peter Slodowy discovered that matrices like
$$M =\left[\begin{array}{cccc}
2&-1&0&1\\
-1&2&-1&1\\
0& -2&2&-2\\
1&1&-1&2
\end{array}\right]$$
were encoding the intersection form on the second homology group of
Milnor fibres for germs of holomorphic maps with an isolated singularity
at the origin [S1], [S2]. These matrices were like the generalized
Cartan matrices of Kac-Moody theory in that they had integer entries,
$2$'s along the diagonal, and $m_{i,j}$ was negative if and only if $m_{j,i}$ was
negative. What was new, however, was the presence of positive entries
off the diagonal. Slodowy called such matrices generalized intersection
matrices:

\begin{defi}{\rm (\cite{S1})} An $n\times n$ integer-valued matrix $M=(m_{i,j})_{1\leq i,j\leq n}$ is called a
generalized intersection matrix ({\gim}) if

\qquad $m_{i,i}=2$,

\qquad $m_{i,j}<0$ if and only if $m_{j,i}<0$, and

\qquad $m_{i,j}>0$ if and only if $m_{j,i}>0$

{\noindent for $1\leq i, j\leq n$ with $i\not= j$.}
\end{defi}

Slodowy used these matrices to define a class of Lie algebras that
encompassed all the Kac-Moody Lie algebras:
\begin{defi}[see \cite{BrM}, \cite{S1}] Given an $n\times n$ generalized intersection
matrix $M = (m_{i,j})$, define a Lie algebra over $\mc$, called a generalized
intersection matrix (GIM) algebra and denoted by $\gim(M)$, with:

generators: $e_1, . . . , e_n, f_1, . . . , f_n, h_1, . . . h_n$,

relations:
(R1) for $1\leq i, j \leq n$,
\beqs [h_i, e_j ] = m_{i,j}e_j,&{[h_i, f_j ]} =-m_{i,j}f_j,&{[e_i, f_i]} = h_i,\eeqs
\qquad (R2) for $m_{i,j}\leq 0$,
\beqs [e_i, f_j ] = 0 = [f_i, e_j ],&&(\ad e_i)^{-m_{i,j}+1}e_j = 0 = (\ad f_i)^{-m_{i,j}+1}f_j,\eeqs
\qquad (R3) for $m_{i,j} > 0$, $i\not= j$,
\beqs [e_i, e_j ] = 0 = [f_i, f_j ],&&(\ad e_i)^{m_{i,j}+1}f_j = 0 = (\ad f_i)^{m_{i,j}+1}e_j.\eeqs
\end{defi}

If the $M$ that we begin with is a generalized Cartan matrix, then
the $3n$ generators and the first two groups of axioms, (R1) and (R2),
provide a presentation of the Kac-Moody Lie algebras [GbK], [C], [K].

Slodowy and, later, Berman showed that the GIM algebras are also
isomorphic to fixed point subalgebras of involutions on larger Kac-
Moody algebras [S1], [Br]. So, in their words, the GIM algebras lie
both "beyond and inside" Kac-Moody algebras.

Further progress came in the 1990s as a byproduct of the work of
Berman and Moody, Benkart and Zelmanov, and Neher on the classification
of root-graded Lie algebras [BrM], [BnZ], [N]. Their work
revealed that some families of intersection matrix ($\emph{\textbf{im}}$) algebras, were universal covering algebras
of well understood Lie algebras.
An $\emph{\textbf{im}}$ algebra generally is a quotient algebra of a GIM algebra associated to the ideal generated  by homogeneous vectors those have long roots (i.e., $(\alpha,\alpha)>2$).

A handful of other researchers also began
engaging these new algebras. For example, Eswara Rao, Moody, and
Yokonuma used vertex operator representations to show that $\emph{\textbf{im}}$ algebras
were nontrivial [EMY]. Analogous compact forms of $\emph{\textbf{im}}$ algebras
arising from conjugations over the complex field were considered in [G]. Peng found
relations between $\emph{\textbf{im}}$ algebras and the representations of tilted algebras
via Ringel-Hall algebras [P]. Berman, Jurisich, and Tan showed
that the presentation of GIM algebras could be put into a broader
framework that incorporated Borcherds algebras [BrJT].

In present paper, we study the GIM algebra $\gim(M_n)$ constructed through intersection matrix
\beqs M_n=(m_{i,j})_{n\times n}=\left[\begin{array}{cccccc}
2&-1&0&\cdots&0&1\\
-1&2&-1&\cdots&0&0\\
0&-1&2&\cdots&0&0\\
\vdots&\vdots&\vdots&\vdots&\vdots&\vdots\\
0&0&0&\cdots&2&-1\\
1&0&0&\cdots&-1&2
\end{array}\right]_{n\times n}\eeqs
where $n\geq 3$, and we build the representation theory of finite-dimensional modules for $\gim(M_n)$.

Generally, $\gim(M_n)$ can be illustrated by the following diagram (also called Dynkin diagram):

\centerline{\xymatrix{
^1\circ\ar@{.}[rrd]\ar@{-}[r]&^2\circ\ar@{-}[r]&^3\circ-&-^{n-2}\circ\ar@{-}[r]&\circ^{n-1}\ar@{-}[lld]\\
&&_n\circ &&&}}
{\noindent}where the numbered circles present the indices of $e_i, f_i$ ($1\leq i\leq n$), the solid line between two circles $i,i+1$ means that $m_{i,i+1}=m_{i+1,i}=-1$ and the unique dotted line means that $m_{1,n}=m_{n,1}=1$. There is no line between any  other pair $(i,j)$, it means that $m_{i,j}=m_{j,i}=0$.

\section{Construction of epimorphisms}

\begin{defi}
Suppose that $\cl$ is a non-trivial semi-simple Lie algebra and ${\bf a}\in\mz_{\geq0}, {\bf c, d}\in\{0,1\}$. If $n\geq 3$ and $\cl$ is isomorphic to a direct sum of ${\bf a}$ copies of $sl_{2n}$, ${\bf c}$ copies of $sp_{2n}$ and ${\bf d}$ copies of $so_{2n}$, then  we say that $\cl$ is of $M(n,{\bf a}, {\bf c}, {\bf d})$ type.
\end{defi}

Let $\cl=\oplus_{k=1}^K\cl_k$ be of type $M(n,{\bf a}, {\bf c}, {\bf d})$, in  this section we  construct  an epimorphism from $\gim(M_n)$ to $\cl$.

For convenience, we fix a Chevalley generators $\{e_{\alpha_i}, f_{\alpha_i}|1\leq i\leq n\}$ for simple Lie algebra of type $C_n$ or $D_n$, and $\{e_{\alpha_i}, f_{\alpha_i}|1\leq i\leq 2n-1\}$ for $A_{2n-1}$. The associative Dynkin diagrams are

\centerline{\xymatrix{D_{n}:&^1\circ\ar@{-}[r]&^2\circ\ar@{-}[r]\ar@{-}[d]&^3\circ-&-\circ\ar@{-}[r]&\circ^{n-1}\\
&&_n\circ}}
\centerline{\xymatrix{C_{n}:&^n\circ\ar@{=>}[r]&^1\circ\ar@{-}[r]&^2\circ-&-\circ\ar@{-}[r]&\circ^{n-1}}}
\centerline{\xymatrix{A_{2n-1}:&^1\circ\ar@{-}[r]&^2\circ\ar@{-}[r]&^3\circ-&-\circ\ar@{-}[r]&\circ^{2n-1}}}

\begin{lemma}\label{L2.1}
Let $a\in\mc^\times, a\not=\pm1$. If $\{e_{\alpha_i},f_{\alpha_i}|1\leq i\leq 2n-1\}$ is the Chevalley generators of $A_{2n-1}$, let
\beqs e_{\alpha_{2n}}&=&a[f_{\alpha_{2n-1}},\cdots[f_{\alpha_2},f_{\alpha_1}]\cdots],\\
f_{\alpha_{2n}}&=&a^{-1}[\cdots[e_{\alpha_1},e_{\alpha_2}]\cdots, e_{\alpha_{2n-1}}],\eeqs
then \beqs e_i\mapsto e_{\alpha_i}-f_{\alpha_{n+i}},&&f_i\mapsto f_{\alpha_i}-e_{\alpha_{n+i}},\eeqs
defines a Lie algebra homomorphism from $\gim(M_n)$ to $A_{2n-1}$, where $1\leq i\leq n$.\end{lemma}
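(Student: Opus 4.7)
The plan is to realize $A_{2n-1}$ as $sl_{2n}$ with Chevalley generators $e_{\alpha_i}=E_{i,i+1}$, $f_{\alpha_i}=E_{i+1,i}$, $h_{\alpha_i}=E_{i,i}-E_{i+1,i+1}$ and to reduce everything to explicit matrix computations. A quick induction on the nested commutators collapses the definitions to $e_{\alpha_{2n}}=a\,E_{2n,1}$ and $f_{\alpha_{2n}}=a^{-1}E_{1,2n}$, so each proposed image becomes a two-term sum of matrix units; setting $\hat h_i:=[e_i,f_i]$, a short computation yields $\hat h_i=h_{\alpha_i}-h_{\alpha_{n+i}}$ for $1\le i\le n-1$ and $\hat h_n=h_{\alpha_n}+E_{1,1}-E_{2n,2n}$. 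The task then reduces to verifying (R1)--(R3) on these explicit matrices.

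For (R1) each $e_j$ and $f_j$ is a sum of two weight vectors of $sl_{2n}$, so one only needs that $\hat h_i$ acts by a single scalar on both summands and that this scalar equals $m_{i,j}$; a case split on the relative position of $(i,j)$ in the cyclic diagram of $M_n$---equal, chain-adjacent, the wrap-around pair $\{1,n\}$, or non-adjacent---makes this transparent, with the parameter $a$ cancelling cleanly in the wrap-around cases. For (R2), with $m_{i,j}\le 0$, I would expand $[e_i,f_j]$ and $[f_i,e_j]$ into four matrix-unit commutators; for $n\ge 3$ they either vanish individually on index grounds or cancel in pairs thanks to the minus sign in $e_i\mapsto e_{\alpha_i}-f_{\alpha_{n+i}}$. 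The Serre-type relations for chain-adjacent $i,j$ then follow from the ordinary $A_{2n-1}$ Serre relations applied summand by summand, with cross commutators vanishing because $\alpha_i$ and $\alpha_{n+j}$ are non-adjacent simple roots of $A_{2n-1}$.

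The main obstacle is (R3), which is the only place where the long-root vectors $E_{1,2n}$ and $E_{2n,1}$---and hence the scalar $a$---actively participate. The commutators $[e_1,e_n]$ and $[f_1,f_n]$ are still handled by direct expansion: for $n\ge 3$ every one of the four matrix-unit brackets vanishes on index grounds. The cubic identities $(\ad e_1)^2 f_n=(\ad e_n)^2 f_1=0$ and their $f$-analogues require a two-step computation in which $a$ and $a^{-1}$ both appear in the first bracket; one then checks that each matrix unit produced lies in rows and columns disjoint from the support of the generator to be applied next, killing the second bracket. Conceptually this closure reflects the realization of $\gim(M_n)$ as a fixed-point subalgebra for an order-two rotation of the affine diagram $A_{2n-1}^{(1)}$, with $a$ parametrizing the allowed twist, but for a self-contained proof the explicit matrix verification is the most direct route. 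The hypothesis $a\ne\pm 1$ is not needed for the homomorphism property itself (any $a\in\mc^\times$ works) and is presumably reserved for later use.
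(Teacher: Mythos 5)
Your proposal is correct, and it takes a more uniformly concrete route than the paper's own proof. The paper splits the work conceptually: for indices $1\le i,j\le n-1$ it observes that the map restricts to a diagonal embedding $\Phi_1\oplus\Phi_2$ into the subalgebra $sl_n\oplus sl_n$ generated by $e_{\alpha_i},f_{\alpha_i}$ ($i\ne n$), with $\Phi_2$ the Chevalley involution composed with inclusion, so all relations not involving the index $n$ hold with no computation at all; only the relations involving $X_n,Y_n,H_n$ are then checked, in abstract root-vector notation via evaluations $\alpha_j(h)$. The matrix-unit identification $e_{\alpha_{2n}}=aE_{2n,1}$, $f_{\alpha_{2n}}=a^{-1}E_{1,2n}$ that you take as your starting point appears in the paper only in Remark 1, as a shortcut for the single computation $H_n=h_{\alpha_n}+\sum_{i=1}^{2n-1}h_{\alpha_i}$ (your $\hat h_n=h_{\alpha_n}+E_{1,1}-E_{2n,2n}$ is the same element). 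Your plan instead verifies all of (R1)--(R3) termwise on matrix units; the representative cases do close as you claim (e.g.\ $[X_1,Y_n]=aE_{2n,2}-E_{n+2,n}$, and applying $\ad X_1$ once more kills both terms on index grounds), so the approach buys elementarity and self-containedness at the cost of more case-checking, while the paper's diagonal-embedding observation disposes of the bulk of the relations for free. Two points to make explicit in a final write-up: first, since the lemma concerns an arbitrary Chevalley system of $A_{2n-1}$, you should record that verifying it in the standard realization $e_{\alpha_i}=E_{i,i+1}$, $f_{\alpha_i}=E_{i+1,i}$ suffices, because any two Chevalley systems are related by an automorphism compatible with the nested-bracket definitions of $e_{\alpha_{2n}},f_{\alpha_{2n}}$; second, your phrase about (R2) terms ``cancelling in pairs'' overstates what happens---for $n\ge3$ every matrix-unit commutator in the brackets $[e_i,f_j]$ and $[e_i,e_j]$ with $m_{i,j}\le0$ vanishes individually, and the minus sign in $e_i\mapsto e_{\alpha_i}-f_{\alpha_{n+i}}$ is instead what equalizes the $\hat h_i$-eigenvalues on the two weight-vector summands in (R1). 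Your closing observation is also right and consistent with the paper: $a\ne\pm1$ is not needed for the homomorphism property (compare Proposition 4.2, where $a=\pm1$ still yields homomorphisms, with images $sp_{2n}$ and $so_{2n}$ respectively); it only matters later, for surjectivity onto $sl_{2n}$ as in Lemma 5.1.
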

\begin{proof}
Let \beqs X_i&=&e_{\alpha_i}-f_{\alpha_{n+i}},\\
Y_i&=&f_{\alpha_i}-e_{\alpha_{n+i}},\\
H_i&=&[X_i,Y_i],\eeqs
for all $1\leq i\leq n$.

Note that the subalgebra generated by $e_{\alpha_i},f_{\alpha_i} (1\leq i\leq 2n-1, i\not=n)$ is isomorphic to $sl_n\oplus sl_n$. So the map
 \beqs e_i\mapsto e_{\alpha_i}-f_{\alpha_{n+i}},&&f_i\mapsto f_{\alpha_i}-e_{\alpha_{n+i}},\eeqs
restricted to $1\leq i\leq n-1$ induces  a diagonal injective map $\Phi_1\oplus\Phi_2$, where
 \beqs \Phi_1: &e_i\mapsto e_{\alpha_i},&f_i\mapsto f_{\alpha_i},\\
 \Phi_2:& e_i\mapsto -f_{\alpha_{n+i}},&f_i\mapsto -e_{\alpha_{n+i}}.\eeqs
Particularly, $\Phi_1$ can be viewed as an inclusion and $\Phi_2$ can be viewed the composition of Chevalley involution and inclusion.

It is sufficient to check the relations involving elements $X_n,Y_n,H_n$.
\beqs H_i&=&[e_{\alpha_i}-f_{\alpha_{n+i}}, f_{\alpha_i}-e_{\alpha_{n+i}}]=h_{\alpha_i}-h_{\alpha_{n+i}},\;\forall\; 1\leq i\leq n-1,\\
H_n&=&[e_{\alpha_n}-f_{\alpha_{2n}}, f_{\alpha_n}-e_{\alpha_{2n}}]=h_{\alpha_n}+(h_{\alpha_1}+\cdots+h_{\alpha_{2n-1}}),\eeqs
where $h_{\alpha_j}=[e_{\alpha_j}, f_{\alpha_j}]$ for all $1\leq j\leq 2n-1$. ({\it For the computation of $H_n$, see Remark 1.}) Then
\beqs [H_n, X_n]&=&[h_{\alpha_n}+(h_{\alpha_1}+\cdots+h_{\alpha_{2n-1}}),e_{\alpha_n}-f_{\alpha_{2n}}]\\
&=&\alpha_n(h_{\alpha_{n-1}}+2h_{\alpha_n}+h_{\alpha_{n+1}})e_{\alpha_n}-(\alpha_1+\cdots+\alpha_{2n-1})(h_{\alpha_n}+(h_{\alpha_1}+\cdots+h_{\alpha_{2n-1}}))f_{\alpha_{2n}}\\
&=&\alpha_n(h_{\alpha_n})e_{\alpha_n}-(\alpha_1+\cdots+\alpha_{2n-1})(h_{\alpha_1}+h_{\alpha_{2n-1}})f_{\alpha_{2n}}\\
&=&2X_n,\\
{[H_n, Y_n]}&=&-\alpha_n(h_{\alpha_n})f_{\alpha_n}+(\alpha_1+\cdots+\alpha_{2n-1})(h_{\alpha_1}+h_{\alpha_{2n-1}})e_{\alpha_{2n}}\\
&=&-2Y_n,
\eeqs
and
\beqs [H_n, X_1]&=&\alpha_1(h_{\alpha_n}+(h_{\alpha_1}+\cdots+h_{\alpha_{2n-1}}))e_{\alpha_1}-(-\alpha_{n+1})(h_{\alpha_n}+(h_{\alpha_1}+\cdots+h_{\alpha_{2n-1}}))f_{\alpha_{n+1}}\\
&=&\alpha_1(h_{\alpha_1}+h_{\alpha_2})e_{\alpha_1}+\alpha_{n+1}(2h_{\alpha_n}+h_{\alpha_{n+1}}+h_{\alpha_{n+2}})f_{\alpha_{n+1}}\\
&=&X_1,\\
{[H_n, Y_1]}&=&-\alpha_1(h_{\alpha_1}+h_{\alpha_2})f_{\alpha_1}-\alpha_{n+1}(2h_{\alpha_n}+h_{\alpha_{n+1}}+h_{\alpha_{n+2}})e_{\alpha_{n+1}}\\
&=&-Y_1,\\
{[H_n, X_{n-1}]}&=&\alpha_{n-1}(h_{\alpha_n}+(h_{\alpha_1}+\cdots+h_{\alpha_{2n-1}}))e_{\alpha_{n-1}}-(-\alpha_{2n-1})(h_{\alpha_n}+(h_{\alpha_1}+\cdots+h_{\alpha_{2n-1}}))f_{\alpha_{2n-1}}\\
&=&\alpha_{n-1}(h_{\alpha_{n-2}}+h_{\alpha_{n-1}}+2h_{\alpha_n})e_{\alpha_{n-1}}+\alpha_{2n-1}(h_{\alpha_{2n-2}}+h_{\alpha_{2n-1}})f_{\alpha_{2n-1}}\\
&=&-X_{n-1},\\
{[H_n, Y_{n-1}]}&=&-\alpha_{n-1}(h_{\alpha_{n-2}}+h_{\alpha_{n-1}}+2h_{\alpha_n})f_{\alpha_{n-1}}-\alpha_{2n-1}(h_{\alpha_{2n-2}}+h_{\alpha_{2n-1}})e_{\alpha_{2n-1}}\\
&=&Y_{n-1}.\eeqs
Similar argument implies that
\beqs &&[H_1, X_n]=X_n,[H_1, Y_n]=-Y_n,[H_{n-1}, X_n]=-X_n,[H_{n-1}, Y_n]=Y_n,\\
&&{[X_1, X_n]}=[Y_1, Y_n]=[Y_{n-1},X_n]=[X_{n-1},Y_n]=0,\\
&&[X_i,[X_i,Y_j]]=[Y_i,[Y_i,X_j]]=0,\hbox{\rm for\;}\{i,j\}=\{1,n\},\\
&&[X_i,[X_i,X_j]]=[Y_i,[Y_i,Y_j]]=0,\hbox{\rm for\;}\{i,j\}=\{n-1,n\}.
\eeqs
Finally, the following relation is clear:
\beqs
[X_n, X_i]=[X_n,Y_i]=[Y_n, X_i]=[Y_n,Y_i]=0\eeqs
for all $2\leq i\leq n-2$.
\end{proof}

{\bf Remark 1.}{\it \quad  Suppose that $\alpha, \beta, \alpha+\beta\in\Delta^+$ in $A_{2n-1}$ and $[e_\alpha, f_\alpha]=\alpha^\vee, [e_\beta, f_\beta]=\beta^\vee$.
Then it holds that $[[e_\alpha,e_\beta],[f_\beta, f_\alpha]]=\alpha^\vee+\beta^\vee$.
    Repeatedly using  this formula, we infer that
\beqs [f_{\alpha_{2n}}, e_{\alpha_{2n}}]=h_{\alpha_1}+\cdots+h_{\alpha_{2n-1}}.\eeqs
Together with that $\alpha_1+\cdots+\alpha_{2n-1}\pm\alpha_n$ is not a root, we obtain the computation of $H_n$.

One can also  understand it in an easy way: Let $A_{2n-1}$ be the matrix Lie algebra $sl_{2n}$ and $e_{\alpha_i}=E_{i,i+1},  f_{\alpha_i}=E_{i,i+1}$ for $i<2n$. Then $e_{\alpha_{2n}}=aE_{2n,1}, f_{\alpha_{2n}}=a^{-1}E_{1,2n}$, which implies that
\beqs [f_{\alpha_{2n}}, e_{\alpha_{2n}}]=E_{1,1}-E_{2n,2n}=\sum_{i=1}^{2n-1}(E_{i,i}-E_{i+1,i+1})=\sum_{i=1}^{2n-1}h_{\alpha_i}.\eeqs}

\begin{lemma}\label{L2.2}
If $\{e_{\alpha_i},f_{\alpha_i}|1\leq i\leq n\}$ is the Chevalley generators of $C_n$,
let \beqs E&=&[f_{\alpha_{n-1}},\cdots[f_{\alpha_1},f_{\alpha_n}]\cdots],\\
F&=&[\cdots[e_{\alpha_{n}},e_{\alpha_1}]\cdots, e_{\alpha_{n-1}}],\eeqs
then \beqs e_i\mapsto e_{\alpha_i},&&f_i\mapsto f_{\alpha_i},\qquad 1\leq i\leq n-1,\\
e_n\mapsto E,&&f_n\mapsto F,\eeqs
defines a Lie algebra homomorphism from $\gim(M_n)$ to $C_{n}$.\end{lemma}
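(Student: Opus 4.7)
The plan is to verify each defining relation (R1)--(R3) of $\gim(M_n)$ under the proposed assignment, separating indices into the ``old'' range $1 \le i,j \le n-1$ and the relations involving $n$. For the former, the map restricts to the standard inclusion of the $A_{n-1}$ subalgebra of $C_n$ generated by $e_{\alpha_1},\dots,e_{\alpha_{n-1}}$ and $f_{\alpha_1},\dots,f_{\alpha_{n-1}}$; since the entries $m_{i,j}$ of $M_n$ in this range coincide with the Cartan integers of that $A_{n-1}$ subsystem (and are all nonpositive), relations (R1) and (R2) are automatic there. The real work concerns relations in which the index $n$ appears.

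The first step is to identify $E$ and $F$ as nonzero root vectors of $C_n$. Set $\gamma := \alpha_1+\alpha_2+\cdots+\alpha_{n-1}+\alpha_n$; in the $\epsilon$-realization with $\alpha_i = \epsilon_{n-i}-\epsilon_{n-i+1}$ for $i<n$ and $\alpha_n = 2\epsilon_n$, telescoping gives $\gamma = \epsilon_1+\epsilon_n$, a short root of $C_n$. Every successive partial sum encountered in the nested brackets is again a root, so $E$ is a nonzero root vector of weight $-\gamma$ and $F$ of weight $+\gamma$. With this identification, $[h_{\alpha_i}, E] = -\gamma(h_{\alpha_i})\,E$ and $[h_{\alpha_i}, F] = \gamma(h_{\alpha_i})\,F$ are immediate, and a direct pairing computation gives $\gamma(h_{\alpha_1}) = -1$, $\gamma(h_{\alpha_{n-1}}) = 1$, $\gamma(h_{\alpha_n}) = 2$, and $\gamma(h_{\alpha_i}) = 0$ for $2 \le i \le n-2$, which matches the required values $m_{i,n}$. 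This establishes (R1) for all $1 \le i \le n-1$.

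The Serre-type relations of (R2) and (R3) involving $n$ reduce to root-system arithmetic in $C_n$. For each relation I compute the weight of the resulting element and verify either that it matches the weight of the target (when nonzero) or that it fails to be a root of $C_n$, in which case the bracket vanishes. For example, $[e_1, e_n] \mapsto [e_{\alpha_1}, E]$ has weight $\alpha_1 - \gamma = -\epsilon_1+\epsilon_{n-1}-2\epsilon_n$, which is not a root, so the bracket vanishes, matching the (R3) requirement $[e_1, e_n] = 0$ for $m_{1,n} = 1$. The quadratic Serre relations $(\ad e_n)^2 f_1 = 0$, $(\ad e_n)^2 e_{n-1} = 0$, and their $f$-counterparts all reduce to checking that weights of the form $\pm 2\gamma \pm \alpha$ fail to be roots for the relevant simple $\alpha$. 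Commutativity with $e_{\alpha_j}, f_{\alpha_j}$ for $2 \le j \le n-2$ is even easier, since $\pm\gamma \pm \alpha_j$ is not a root at all.

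The single point requiring more than a weight count is the Cartan relation $[e_n, f_n] = h_n$, that is, $H_n := [E, F]$ should satisfy $[H_n, X_j] = m_{n,j} X_j$ for \emph{every} $j$, including $j = n$. As a bracket of opposite root vectors, $[E, F]$ lies in the Cartan subalgebra and is proportional to $\gamma^\vee$; the delicate point is to pin down the scalar, namely $[E,F] = -\gamma^\vee$. I would establish this in the spirit of Remark~1: either inductively, via repeated use of the identity $[[e_\alpha, e_\beta], [f_\beta, f_\alpha]] = \alpha^\vee + \beta^\vee$ along the partial-sum chain $\alpha_n,\, \alpha_n+\alpha_1,\, \alpha_n+\alpha_1+\alpha_2,\, \ldots$, or directly via the $sp_{2n}$ matrix realization. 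I expect this normalization to be the main technical obstacle because of the long/short root mixing at $\alpha_n$, which introduces a factor of $2$ in some structure constants. Once $[E, F] = -\gamma^\vee$ is secured, the remaining Cartan actions $[H_n, X_j] = m_{n,j} X_j$ follow from the pairings $\alpha_j(\gamma^\vee)$ already recorded.
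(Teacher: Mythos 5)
Your proof follows essentially the same route as the paper's: restrict attention to the relations involving $E,F$, identify them as nonzero root vectors of weights $\mp\gamma$ where $\gamma=\alpha_1+\cdots+\alpha_n$ is a short root (your explicit $\gamma=\epsilon_1+\epsilon_n$ is in fact more accurate than the paper's ``highest short root'' label), dispose of all commutation and Serre-type relations by checking that the relevant weights fail to be roots, and normalize $H=[E,F]=-\gamma^\vee=-(h_{\alpha_1}+\cdots+h_{\alpha_{n-1}}+2h_{\alpha_n})$, which is exactly the paper's value (asserted there, as in your sketch, without a detailed derivation). One non-load-bearing slip: $\gamma(h_{\alpha_n})=\langle\gamma,\alpha_n^\vee\rangle=2(\gamma,\alpha_n)/(\alpha_n,\alpha_n)=1$, not $2$; nothing depends on it, since the value $m_{n,n}=2$ comes from $\gamma(\gamma^\vee)=2$ once $[E,F]=-\gamma^\vee$ is secured, exactly as your final paragraph does.
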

\begin{proof}
It is sufficient to check the relation involving elements $E,F$.  Actually, $F$ (respectively $E$) is a root vector of highest short root (respectively lowest short root). Then
\beqs [F, e_{\alpha_i}]=[E, f_{\alpha_i}]=0&&\forall\; 2\leq i\leq n-1,\\
{[F, f_{\alpha_i}]}={[E, e_{\alpha_i}]}=0&&\forall\; 1\leq i\leq n-2,\eeqs
and
\beqs [F,[F,e_{\alpha_1}]]=[e_{\alpha_1},[e_{\alpha_1},F]]=0,&&[E,[E,f_{\alpha_1}]]=[f_{\alpha_1},[f_{\alpha_1},E]]=0,\\
  {[F,[F,f_{\alpha_{n-1}}]]}=[f_{\alpha_{n-1}},[f_{\alpha_{n-1}},F]]=0,&&[E,[E,e_{\alpha_{n-1}}]]=[e_{\alpha_{n-1}},[e_{\alpha_{n-1}},E]]=0.  \eeqs
Moreover, $H:=[E,F]=-(2h_{\alpha_n}+h_{\alpha_1}+\cdots+h_{\alpha_{n-1}})$, where $h_{\alpha_i}=[e_{\alpha_i},f_{\alpha_i}]$ for all $1\leq i\leq n$. Hence, we have
\beqs [H,E]&=&(-\alpha_1-\cdots-\alpha_n)(-(2h_{\alpha_n}+h_{\alpha_1}+\cdots+h_{\alpha_{n-1}}))E=2E,\\
{[H,F]}&=&(\alpha_1+\cdots+\alpha_n)(-(2h_{\alpha_n}+h_{\alpha_1}+\cdots+h_{\alpha_{n-1}}))F=-2F,\eeqs
and
\beqs [H,e_{\alpha_1}]=e_{\alpha_1},&&[H,f_{\alpha_1}]=-f_{\alpha_1},\\
{[H,e_{\alpha_{n-1}}]}=-e_{\alpha_{n-1}},&&[H,f_{\alpha_{n-1}}]=f_{\alpha_{n-1}},\\
{[h_{\alpha_1}},E]=E,&& {[h_{\alpha_{n-1}}},E]=-E,\\
{[h_{\alpha_1}},F]=-F,&& {[h_{\alpha_{n-1}}},F]=F.\eeqs
The above calculation implies our statement and the proof is completed.
\end{proof}
\begin{lemma}\label{L2.3}
If $\{e_{\alpha_i},f_{\alpha_i}|1\leq i\leq n\}$ is the Chevalley generators of $D_n$, let \beqs E&=&[f_{\alpha_{n-1}},\cdots[f_{\alpha_2},f_{\alpha_n}]\cdots],\\
F&=&[\cdots[e_{\alpha_{n}},e_{\alpha_2}]\cdots, e_{\alpha_{n-1}}],\eeqs
then \beqs e_i\mapsto e_{\alpha_i},&&f_i\mapsto f_{\alpha_i},\qquad 1\leq i\leq n-1,\\
e_n\mapsto E,&&f_n\mapsto F,\eeqs
defines a Lie algebra homomorphism from $\gim(M_n)$ to $D_{n}$.\end{lemma}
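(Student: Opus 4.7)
The plan is to mimic the proof of Lemma~\ref{L2.2}. Under the proposed map, $e_1,f_1,\ldots,e_{n-1},f_{n-1}$ go to the standard Chevalley generators of the $A_{n-1}$-subalgebra of $D_n$ supported on $\alpha_1,\ldots,\alpha_{n-1}$, so every relation of $\gim(M_n)$ indexed only by $\{1,\ldots,n-1\}$ is inherited verbatim from $D_n$. It therefore suffices to check the relations involving the index $n$. The crucial structural observation is that, in the labelling of $D_n$ used here, $\alpha_n$ is adjacent only to $\alpha_2$, so the simple roots $\alpha_n,\alpha_2,\alpha_3,\ldots,\alpha_{n-1}$ form a connected chain and span a subalgebra $\mathfrak{a}\subset D_n$ of type $A_{n-1}$; inside $\mathfrak{a}$, the element $F$ is the highest-root vector and $E$ is (up to sign) the lowest-root vector.

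\textbf{Cartan data and the relations supported inside $\mathfrak{a}$.} Applying the telescoping identity $[[e_\alpha,e_\beta],[f_\beta,f_\alpha]]=\alpha^\vee+\beta^\vee$ of Remark 1 repeatedly to the nested brackets defining $E$ and $F$, I would derive
\[
H_n:=[E,F]=\varepsilon\bigl(h_{\alpha_2}+h_{\alpha_3}+\cdots+h_{\alpha_{n-1}}+h_{\alpha_n}\bigr)
\]
for some fixed sign $\varepsilon\in\{\pm1\}$. The Cartan-action relations $[H_n,E]=2E$, $[H_n,F]=-2F$, $[H_n,e_{\alpha_i}]=m_{n,i}e_{\alpha_i}$, $[h_{\alpha_i},E]=m_{i,n}E$, $[h_{\alpha_i},F]=-m_{i,n}F$ for $1\le i\le n-1$, and their $f$-analogues, then follow from direct arithmetic with the $D_n$ Cartan matrix. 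All remaining relations in $\gim(M_n)$ that involve $E,F$ together with some $e_{\alpha_i},f_{\alpha_i}$ with $i\in\{2,\ldots,n-1\}$ hold inside $\mathfrak{a}$: the highest/lowest-root characterization immediately gives the vanishings $[e_{\alpha_j},F]=[f_{\alpha_j},E]=0$ for $j\in\{2,\ldots,n-1\}$ and, for the pair $(n-1,n)$, the Serre-type relations $(\ad e_{\alpha_{n-1}})^2E=0$ and $(\ad f_{\alpha_{n-1}})^2F=0$. The brackets $[e_{\alpha_i},E]$ and $[f_{\alpha_i},F]$ with interior indices $i\in\{2,\ldots,n-2\}$ also vanish, since the relevant weight falls outside the $\alpha_i$-string through the lowest (resp.\ highest) root of $\mathfrak{a}$.

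\textbf{Main obstacle: the pair $(1,n)$.} The only relations not confined to $\mathfrak{a}$ are those produced by the single positive off-diagonal entry $m_{1,n}=1$: namely $[e_{\alpha_1},E]=0$, $[f_{\alpha_1},F]=0$, $(\ad e_{\alpha_1})^2F=0$, and $(\ad f_{\alpha_1})^2E=0$. Since $\alpha_1$ is not a root of $\mathfrak{a}$, these are genuine statements about the ambient $D_n$. Each reduces to the claim that $\alpha_1\pm(\alpha_2+\cdots+\alpha_n)$ and $2\alpha_1+(\alpha_2+\cdots+\alpha_n)$ are not roots of $D_n$, which can be settled by a short coefficient check (for instance in the standard orthonormal realization of the $D_n$ root system, where the resulting weights acquire more than two nonzero coordinates). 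I expect this to be the main technical check of the proof, and the only step that genuinely uses that the target is $D_n$ rather than just the embedded $A_{n-1}$.
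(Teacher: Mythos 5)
Your overall strategy coincides with the paper's own proof: both restrict to the type $A_{n-1}$ subalgebra $\mathfrak{a}$ generated by $e_{\alpha_i},f_{\alpha_i}$ ($2\le i\le n$), recognize $F$ and $E$ as highest- and lowest-root vectors of $\mathfrak{a}$, compute $H_n=[E,F]$, and reduce the relations coupling indices $1$ and $n$ to non-root checks in the ambient $D_n$. (One small tightening: you cannot leave the sign as an undetermined $\varepsilon\in\{\pm1\}$ — relation (R1) forces $[H_n,E]=2E$, which requires $\varepsilon=-1$; the telescoping identity of Remark 1 does yield $[F,E]=h_{\alpha_2}+\cdots+h_{\alpha_n}$, i.e.\ $H_n=-(h_{\alpha_2}+\cdots+h_{\alpha_n})$, matching the paper.)

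However, your final reduction contains a false claim and an omission. First, $\alpha_1+(\alpha_2+\cdots+\alpha_n)$ \emph{is} a root of $D_n$ — the paper states this explicitly. In an orthonormal realization adapted to the paper's labelling (branch node $\alpha_2$), take $\alpha_1=\epsilon_{n-1}-\epsilon_n$, $\alpha_n=\epsilon_{n-1}+\epsilon_n$, $\alpha_j=\epsilon_{n-j}-\epsilon_{n-j+1}$ for $2\le j\le n-1$; then $\beta:=\alpha_2+\cdots+\alpha_n=\epsilon_1+\epsilon_n$ and $\alpha_1+\beta=\epsilon_1+\epsilon_{n-1}$, which has exactly two nonzero coordinates, contradicting your parenthetical coefficient argument. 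Fortunately this claim is not needed: since $m_{1,n}=1>0$, (R3) does not impose $[e_1,f_n]=0$ — only (R2), for $m_{i,j}\le0$, imposes mixed $e$-$f$ commutativity — and indeed $[e_{\alpha_1},F]\ne0$. The four relations you list require only $\alpha_1-\beta\notin\Delta$ and $2\alpha_1+\beta\notin\Delta$. Second, you have dropped half of (R3): taking $(i,j)=(n,1)$ it also demands $(\ad e_n)^2f_1=0$ and $(\ad f_n)^2e_1=0$, i.e.\ $(\ad E)^2f_{\alpha_1}=0$ and $(\ad F)^2e_{\alpha_1}=0$. These are not automatic — precisely because $\alpha_1+\beta$ is a root, the inner brackets $[F,e_{\alpha_1}]$ and $[E,f_{\alpha_1}]$ are nonzero — and they require the third non-root fact that the paper verifies, namely $\alpha_1+2(\alpha_2+\cdots+\alpha_n)\notin\Delta$ (equal to $2\epsilon_1+\epsilon_{n-1}+\epsilon_n$ in the coordinates above, hence not a root). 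With the false claim about $\alpha_1+\beta$ deleted and this extra check added, your argument becomes essentially identical to the paper's proof.
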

\begin{proof}
It is sufficient to check the relation involving elements $E,F$.  Actually, $F$ (respectively $E$) is a root vector of highest  root (respectively lowest root) of subalgebra generated by $e_{\alpha_i},f_{\alpha_i} (2\leq i\leq n)$. Then
\beqs [F, e_{\alpha_i}]=[E, f_{\alpha_i}]=0&&\forall\; 2\leq i\leq n-1,\\
{[F, f_{\alpha_i}]}={[E, e_{\alpha_i}]}=0&&\forall\; 2\leq i\leq n-2,\eeqs
and
\beqs {[F,[F,f_{\alpha_{n-1}}]]}=[f_{\alpha_{n-1}},[f_{\alpha_{n-1}},F]]=0,&&[E,[E,e_{\alpha_{n-1}}]]=[e_{\alpha_{n-1}},[e_{\alpha_{n-1}},E]]=0.  \eeqs
Notice that $\alpha_1+\cdots+\alpha_n$ is a root, and neither of $-\alpha_1+\alpha_2+\cdots+\alpha_n$, $2\alpha_1+\cdots+\alpha_n$ and $\alpha_1+2(\alpha_2+\cdots+\alpha_n)$ is a root, hence we have
\beqs [F,[F,e_{\alpha_1}]]=[e_{\alpha_1},[e_{\alpha_1},F]]=0,&&[E,[E,f_{\alpha_1}]]=[f_{\alpha_1},[f_{\alpha_1},E]]=0,\\
{[F, f_{\alpha_1}]}=0,&&{[E, e_{\alpha_1}]}=0.\eeqs

Moreover, $H:=[E,F]=-(h_{\alpha_2}+\cdots+h_{\alpha_{n}})$, where $h_{\alpha_i}=[e_{\alpha_i},f_{\alpha_i}]$ for all $1\leq i\leq n$. Hence, we have
\beqs [H,E]&=&(-\alpha_2-\cdots-\alpha_n)(-(h_{\alpha_2}+\cdots+h_{\alpha_{n}}))E=2E,\\
{[H,F]}&=&(\alpha_2+\cdots+\alpha_n)(-(h_{\alpha_2}+\cdots+h_{\alpha_{n}}))F=-2F,\eeqs
and
\beqs [H,e_{\alpha_1}]=e_{\alpha_1},&&[H,f_{\alpha_1}]=-f_{\alpha_1},\\
{[H,e_{\alpha_{n-1}}]}=-e_{\alpha_{n-1}},&&[H,f_{\alpha_{n-1}}]=f_{\alpha_{n-1}},\\
{[h_{\alpha_1}},E]=E,&& {[h_{\alpha_{n-1}}},E]=-E,\\
{[h_{\alpha_1}},F]=-F,&& {[h_{\alpha_{n-1}}},F]=F.\eeqs
The above calculation implies our statement and the proof is completed.
\end{proof}

Next we begin to explicitly construct an epimorphism from $\gim(M_n)$ to $\cl$.  The construction is divided into four distinguish cases.

{\noindent\bf Case 1.} $\cl_k\cong sl_{2n}$ for all $1\leq k\leq K$.

Let $\{e_{\alpha_i}^{[k]}, f_{\alpha_i}^{[k]}|1\leq i\leq 2n-1\}$ be the analogue of Chevalley generators of $A_{2n-1}$ in $\cl_k$ for all $k$. Choose  a $K$-tuple $\underline{a}:=(a_1,\cdots, a_K)\in(\mc^\times)^K$ such that $a_k\not=\pm1$ and $a_k\not=a_{j}^{\pm1}$ for all $k\not=j$. Set
\beqs e_{\alpha_{2n}}^{[k]}&=&a_k[f_{\alpha_{2n-1}}^{[k]},\cdots[f_{\alpha_2}^{[k]},f_{\alpha_1}^{[k]}]\cdots],\\
f_{\alpha_{2n}}^{[k]}&=&a_k^{-1}[\cdots[e_{\alpha_1}^{[k]},e_{\alpha_2}^{[k]}]\cdots, e_{\alpha_{2n-1}}^{[k]}],\eeqs
for all $1\leq k\leq K$, where $e_{\alpha_{2n}}^{[k]}$ and $f_{\alpha_{2n}}^{[k]}$ are root vectors of lowest root and highest root, respectively.

{\noindent\bf Case 2.} $\cl_1\cong sp_{2n}$ and $\cl_k\cong sl_{2n}$ for all $2\leq k\leq K$.

Let $\{e_{\alpha_i}^{[1]}, f_{\alpha_i}^{[1]}|1\leq i\leq n\}$ be the analogue of  Chevalley generators of $C_n$ in $\cl_1$. and  $\{e_{\alpha_i}^{[k]}, f_{\alpha_i}^{[k]}|1\leq i\leq 2n-1\}$ be the analogue of Chevalley generators of $A_{2n-1}$ in $\cl_k$ for $k\geq 2$. Choose  a $K$-tuple ${\underline{a}}:=(1,a_2,\cdots, a_K)\in(\mc^\times)^K$ such that $a_k\not=\pm1$ and $a_k\not=a_{j}^{\pm1}$ for all $k\not=j$. Set
\beqs e_{\alpha_{2n}}^{[k]}&=&a_k[f_{\alpha_{2n-1}}^{[k]},\cdots[f_{\alpha_2}^{[k]},f_{\alpha_1}^{[k]}]\cdots],\\
f_{\alpha_{2n}}^{[k]}&=&a_k^{-1}[\cdots[e_{\alpha_1}^{[k]},e_{\alpha_2}^{[k]}]\cdots, e_{\alpha_{2n-1}}^{[k]}],\eeqs
for all $2\leq k\leq K$, and
\beqs
e_{\alpha_{2n}}^{[1]}&=&f_{\alpha_n}^{[1]}-[\cdots[[e_{\alpha_{n}}^{[1]},e_{\alpha_1}^{[1]}],e_{\alpha_2}^{[1]}]\cdots, e_{\alpha_{n-1}}^{[1]}],\\
f_{\alpha_{2n}}^{[1]}&=&e_{\alpha_n}^{[1]}-[f_{\alpha_{n-1}}^{[1]},\cdots[f_{\alpha_2}^{[1]},[f_{\alpha_1}^{[1]},f_{\alpha_n}^{[1]}]]\cdots],\\
&&e_{\alpha_{n+1}}^{[1]}=\cdots=e_{\alpha_{2n-1}}^{[1]}=0,\\
&&f_{\alpha_{n+1}}^{[1]}=\cdots=f_{\alpha_{2n-1}}^{[1]}=0,\eeqs
where $f_{\alpha_{n}}^{[1]}-e_{\alpha_{2n}}^{[1]}$ and $e_{\alpha_{n}}^{[1]}-f_{\alpha_{2n}}^{[1]}$ are root vectors of highest short root and lowest short root, respectively.

{\noindent\bf Case 3.} $\cl_1\cong so_{2n}$ and $\cl_k\cong sl_{2n}$ for all $2\leq k\leq K$.

Let $\{e_{\alpha_i}^{[1]}, f_{\alpha_i}^{[1]}|1\leq i\leq n\}$ be the analogue of  Chevalley generators of $C_n$ in $\cl_1$. and  $\{e_{\alpha_i}^{[k]}, f_{\alpha_i}^{[k]}|1\leq i\leq 2n-1\}$ be the analogue of Chevalley generators of $A_{2n-1}$ in $\cl_k$ for $k\geq 2$. Choose  a $K$-tuple ${\underline{a}}:=(-1,a_2,\cdots, a_K)\in(\mc^\times)^K$ such that $a_k\not=\pm1$ and $a_k\not=a_{j}^{\pm1}$ for all $k\not=j$. Set
\beqs e_{\alpha_{2n}}^{[k]}&=&a_k[f_{\alpha_{2n-1}}^{[k]},\cdots[f_{\alpha_2}^{[k]},f_{\alpha_1}^{[k]}]\cdots],\\
f_{\alpha_{2n}}^{[k]}&=&a_k^{-1}[\cdots[e_{\alpha_1}^{[k]},e_{\alpha_2}^{[k]}]\cdots, e_{\alpha_{2n-1}}^{[k]}],\eeqs
for all $2\leq k\leq K$, and
\beqs
e_{\alpha_{2n}}^{[1]}&=&f_{\alpha_n}^{[1]}-[\cdots[e_{\alpha_{n}}^{[1]},e_{\alpha_2}^{[1]}]\cdots, e_{\alpha_{n-1}}^{[1]}],\\
f_{\alpha_{2n}}^{[1]}&=&e_{\alpha_n}^{[1]}-[f_{\alpha_{n-1}}^{[1]},\cdots[f_{\alpha_2}^{[1]}, f_{\alpha_n}^{[1]}]\cdots],\\
&&e_{\alpha_{n+1}}^{[1]}=\cdots=e_{\alpha_{2n-1}}^{[1]}=0,\\
&&f_{\alpha_{n+1}}^{[1]}=\cdots=f_{\alpha_{2n-1}}^{[1]}=0.\eeqs

{\noindent\bf Case 4.} $\cl_1\cong so_{2n}$, $\cl_2\cong sp_{2n}$  and $\cl_k\cong sl_{2n}$ for all $3\leq k\leq K$.

Let $\{e_{\alpha_i}^{[1]}, f_{\alpha_i}^{[1]}|1\leq i\leq n\}$ be the analogue of  Chevalley generators of $D_n$ in $\cl_1$, $\{e_{\alpha_i}^{[2]}, f_{\alpha_i}^{[2]}|1\leq i\leq n\}$ be the analogue of  Chevalley generators of $C_n$ in $\cl_2$, and  $\{e_{\alpha_i}^{[k]}, f_{\alpha_i}^{[k]}|1\leq i\leq 2n-1\}$ be the analogue of Chevalley generators of $A_{2n-1}$ in $\cl_k$ for $k\geq 3$. Choose  a $K$-tuple ${\underline{a}}:=(-1,1, a_3,\cdots, a_K)\in(\mc^\times)^K$ such that $a_k\not=\pm1$ and $a_k\not=a_{j}^{\pm1}$ for all $k\not=j$. Set
\beqs e_{\alpha_{2n}}^{[k]}&=&a_k[f_{\alpha_{2n-1}}^{[k]},\cdots[f_{\alpha_2}^{[k]},f_{\alpha_1}^{[k]}]\cdots],\\
f_{\alpha_{2n}}^{[k]}&=&a_k^{-1}[\cdots[e_{\alpha_1}^{[k]},e_{\alpha_2}^{[k]}]\cdots, e_{\alpha_{2n-1}}^{[k]}],\eeqs
for all $3\leq k\leq K$, and
\beqs
e_{\alpha_{2n}}^{[1]}&=&f_{\alpha_n}^{[1]}-[\cdots[e_{\alpha_{n}}^{[1]},e_{\alpha_2}^{[1]}]\cdots, e_{\alpha_{n-1}}^{[1]}],\\
f_{\alpha_{2n}}^{[1]}&=&e_{\alpha_n}^{[1]}-[f_{\alpha_{n-1}}^{[1]},\cdots[f_{\alpha_2}^{[1]}, f_{\alpha_n}^{[1]}]\cdots],\\
&&e_{\alpha_{n+1}}^{[1]}=\cdots=e_{\alpha_{2n-1}}^{[1]}=0,\\
&&f_{\alpha_{n+1}}^{[1]}=\cdots=f_{\alpha_{2n-1}}^{[1]}=0,\\
e_{\alpha_{2n}}^{[2]}&=&f_{\alpha_n}^{[2]}-[\cdots[[e_{\alpha_{n}}^{[2]},e_{\alpha_1}^{[2]}],e_{\alpha_2}^{[2]}]\cdots, e_{\alpha_{n-1}}^{[2]}],\\
f_{\alpha_{2n}}^{[2]}&=&e_{\alpha_n}^{[2]}-[f_{\alpha_{n-1}}^{[2]},\cdots[f_{\alpha_2}^{[2]},[f_{\alpha_1}^{[2]},f_{\alpha_n}^{[2]}]]\cdots],\\
&&e_{\alpha_{n+1}}^{[2]}=\cdots=e_{\alpha_{2n-1}}^{[2]}=0,\\
&&f_{\alpha_{n+1}}^{[2]}=\cdots=f_{\alpha_{2n-1}}^{[2]}=0.\eeqs

In all above four cases, we define a homomorphism from $\gim(M_n)\rightarrow\cl$ via
\beqs \Psi_{\underline{a}}:&& e_i\mapsto \sum_{k=1}^K\big(e_{\alpha_i}^{[k]}-f_{\alpha_{n+i}}^{[k]}\big), \; f_i\mapsto \sum_{k=1}^K\big(f_{\alpha_i}^{[k]}-e_{\alpha_{n+i}}^{[k]}\big),\eeqs
for all $1\leq i\leq n$.

\begin{proposition}
$\Psi_{\underline{a}}$ is a Lie algebra epimorphism.
\end{proposition}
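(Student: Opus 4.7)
The plan is to dispatch the two halves of the statement in turn: first that $\Psi_{\underline{a}}$ is a well-defined Lie algebra homomorphism, then that it is surjective.

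For the homomorphism claim, I would observe that a linear map into a direct sum $\bigoplus_{k=1}^K \cl_k$ is a Lie algebra homomorphism if and only if each component projection $\pi_k\circ \Psi_{\underline{a}}\colon \gim(M_n)\to \cl_k$ is. Going through the four cases, each such projection coincides with one of the homomorphisms already produced in Lemmas \ref{L2.1}, \ref{L2.2}, \ref{L2.3}. For the $sl_{2n}$ components this is immediate (Lemma \ref{L2.1} with parameter $a_k$); for the $sp_{2n}$ or $so_{2n}$ components in Cases 2--4, the prescription $e^{[k]}_{\alpha_{n+j}} = f^{[k]}_{\alpha_{n+j}} = 0$ for $1 \leq j \leq n-1$, together with the explicit formulas for $e^{[k]}_{\alpha_{2n}}, f^{[k]}_{\alpha_{2n}}$, unwinds $e^{[k]}_{\alpha_n} - f^{[k]}_{\alpha_{2n}}$ and $f^{[k]}_{\alpha_n} - e^{[k]}_{\alpha_{2n}}$ to precisely the elements $E$ and $F$ of the appropriate lemma. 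Hence $\Psi_{\underline{a}}$ itself is a homomorphism.

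For surjectivity I would first show each component projection is surjective. For the $sp_{2n}$ and $so_{2n}$ factors, the image contains the Chevalley generators $e^{[k]}_{\alpha_i}, f^{[k]}_{\alpha_i}$ for $1 \leq i \leq n-1$ along with the root vectors $E, F$ supplied by Lemmas \ref{L2.2}, \ref{L2.3}, and a standard Chevalley-generation argument shows this collection generates the full simple Lie algebra. For an $sl_{2n}$ factor the image is generated by $X^{[k]}_i = e^{[k]}_{\alpha_i} - f^{[k]}_{\alpha_{n+i}}$ and $Y^{[k]}_i = f^{[k]}_{\alpha_i} - e^{[k]}_{\alpha_{n+i}}$, and the individual Chevalley generators have to be extracted. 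The mechanism is that iterated brackets such as $[X^{[k]}_1,[X^{[k]}_2, X^{[k]}_3]]$ collect into elements of the form $(1 - a_k^{-1})\cdot e^{[k]}_\alpha$ for genuine (non-simple) root vectors $e^{[k]}_\alpha$; since $a_k \neq \pm 1$ the scalar is invertible, so one extracts every $e^{[k]}_{\alpha_i}, f^{[k]}_{\alpha_i}$ and thereby all of $sl_{2n}$.

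The substantive remaining step is to upgrade componentwise surjectivity to surjectivity onto $\cl$, which I plan to handle by a Goursat-type decomposition. Set $\mathcal{G} = \Psi_{\underline{a}}(\gim(M_n))$ and $\mathcal{G}_k = \mathcal{G}\cap \cl_k$; each $\mathcal{G}_k$ is an ideal of the simple $\cl_k = \pi_k(\mathcal{G})$, so it is either $0$ or all of $\cl_k$. Induction on $K$ reduces everything to the pairwise claim that $\pi_{kj}(\mathcal{G}) = \cl_k\oplus\cl_j$ for every $k\neq j$; the only alternative (by Goursat) is that $\pi_{kj}(\mathcal{G})$ is the graph of a Lie algebra isomorphism $\varphi\colon\cl_k\xrightarrow{\sim}\cl_j$. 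If $\cl_k$ and $\cl_j$ have distinct types among $sl_{2n}, sp_{2n}, so_{2n}$, no such $\varphi$ exists. If both are copies of $sl_{2n}$, any such $\varphi$ must send each $e^{[k]}_{\alpha_i} - f^{[k]}_{\alpha_{n+i}}$ ($i<n$) to its $\cl_j$-counterpart; up to inner automorphism this pins $\varphi$ down to either the identity or the diagram automorphism of $A_{2n-1}$, and then matching the highest-root coefficient in $\varphi(e^{[k]}_{\alpha_n} - a_k^{-1}[\cdots]) = e^{[j]}_{\alpha_n} - a_j^{-1}[\cdots]$ forces $a_k = a_j$ or $a_k = a_j^{-1}$, contradicting the standing assumption $a_k \neq a_j^{\pm 1}$.

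The hardest part will be the Goursat step, in particular the careful bookkeeping required to show that the parameter $a_k$ of each $sl_{2n}$ component is visible in the image and that no automorphism of $sl_{2n}$ can reconcile two distinct such parameters. The $sl_{2n}$ componentwise surjectivity is conceptually routine but similarly bookkeeping-heavy, and is precisely where the hypothesis $a_k \neq \pm 1$ enters.
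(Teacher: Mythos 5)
Your proposal is correct, and its first half coincides with the paper's: the paper also proves the homomorphism claim by noting $\Psi_{\underline{a}}=\oplus_{k=1}^K P_k\circ\Psi_{\underline{a}}$ and checking each projection against Lemmas \ref{L2.1}--\ref{L2.3}. For surjectivity, however, you take a genuinely different route. The paper (deferring to the computations of Lemmas \ref{L5.1}--\ref{L5.4}) works constructively: it exhibits the element $H_{1,n+1}=\sum_{k}(2+a_k+a_k^{-1})(E_{1,1}-E_{n+1,n+1})^{[k]}$ in the image, applies powers of $\ad H_{1,n+1}$ to $\Psi_{\underline{a}}(e_1)$ and $\Psi_{\underline{a}}(f_1)$, and inverts a Vandermonde matrix --- the eigenvalues $2+a_k+a_k^{-1}$ are nonzero and pairwise distinct precisely because $a_k\neq-1$ and $(a_ka_j-1)(a_k-a_j)\neq0$ --- to isolate each $x_1^{[k]},y_1^{[k]}$; it then propagates along the diagram to all $x_i^{[k]},y_i^{[k]}$, recovers a component with $a_1=\pm1$ (which drops out of $H_{1,n+1}$ when $a_1=-1$) by subtraction as in Lemma \ref{L5.3}, and finishes componentwise via Proposition \ref{P4.2}. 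You instead prove componentwise surjectivity first and separate the components by Goursat, ruling out graph subalgebras: impossible between distinct types, and for two $sl_{2n}$ factors your automorphism analysis is sound --- an automorphism of $sl_{2n}$ fixing the diagonally embedded $sl_n$ pointwise is $\mathrm{Ad}(\mathrm{diag}(\lambda I_n,\mu I_n))$, possibly composed with the outer involution twisted by the block swap, and matching the images of $e_n$ then forces $a_k=a_j$ or $a_ka_j=1$, exactly what the standing hypothesis excludes (the pairwise-to-global induction is standard for sums of simple ideals). What each approach buys: yours makes transparent that $a_k\neq a_j^{\pm1}$ is precisely the no-graph condition, at the cost of the automorphism classification; the paper's Vandermonde argument is more computational but yields explicit preimages, and --- importantly --- it is reused almost verbatim in Section 6 for Theorem 3.2, where the parameters occur in inverse pairs $a_k,a_k^{-1}$ and the image on such a pair genuinely \emph{is} a graph (Lemma \ref{L6.5}, producing only $x_i^{[k]}+x_i^{[j]}$); your Goursat step survives there too, but the graph alternative is then realized rather than excluded, so the two situations must be kept apart. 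One small slip worth fixing: in the paper's conventions the scalar extracted in the $sl_{2n}$ case is $(1+a_k^{-1})$ on the $e$-side and $(1+a_k)$ on the $f$-side, vanishing at $a_k=-1$, not $(1-a_k^{-1})$; this is harmless for your argument since you assume $a_k\neq\pm1$, but the sign matters when one tracks which degenerate parameter value produces $so_{2n}$ versus $sp_{2n}$.
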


\begin{proof}
  Let $P_k$ be the projection from $\cl$ to $\cl_k$.  By Lemmas \ref{L2.1}-\ref{L2.3}, we infer that $P_k\circ \Psi_{\underline{a}}$ is a homomorphism for all $1\leq k\leq K$, then $\Psi_{\underline{a}}=\oplus_{k=1}^K P_k\circ\Psi_{\underline{a}}$ is a homomorphism of Lie algebras.
 The detailed proof for that $\Psi_{\underline{a}}$ is an epimorphism, is very similar to the proof for Lemmas 5.1-5.4 in Section 5.
\end{proof}

\section{Main results}

Our main result can be stated by the following three theorems.

\begin{theorem}
If $\cl$ is of type $M(n, {\bf a}, {\bf c}, {\bf d})$, then there exists an  ideal $I$ of $\gim(M_n)$, such that
\beqs \gim(M_n)/I\cong \cl.\eeqs
\end{theorem}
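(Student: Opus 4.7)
The plan is to take $I := \ker(\Psi_{\underline{a}})$, where $\Psi_{\underline{a}}$ is the epimorphism constructed in the preceding Proposition, and then invoke the first isomorphism theorem for Lie algebras. Since $\Psi_{\underline{a}}$ is already established to be a surjective Lie algebra homomorphism from $\gim(M_n)$ onto $\cl$, its kernel is automatically an ideal of $\gim(M_n)$ and the induced map $\overline{\Psi}_{\underline{a}} : \gim(M_n)/I \to \cl$ is an isomorphism. So the content of the theorem reduces almost entirely to showing that the four explicit constructions given in Section 2 exhaust every isomorphism type allowed by the definition of $M(n,\mathbf{a},\mathbf{c},\mathbf{d})$.

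Accordingly, the first step is just an enumeration. Because $\mathbf{c},\mathbf{d}\in\{0,1\}$ and $\cl=\bigoplus_{k=1}^{K}\cl_k$ is non-trivial, the pair $(\mathbf{c},\mathbf{d})$ takes one of four values: $(0,0)$ (Case~1, with $\mathbf{a}=K$ copies of $sl_{2n}$), $(1,0)$ (Case~2, one $sp_{2n}$ and $\mathbf{a}=K-1$ copies of $sl_{2n}$), $(0,1)$ (Case~3, one $so_{2n}$ and $\mathbf{a}=K-1$ copies of $sl_{2n}$), and $(1,1)$ (Case~4, one $so_{2n}$, one $sp_{2n}$ and $\mathbf{a}=K-2$ copies of $sl_{2n}$). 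After reindexing the summands so that the non-$sl_{2n}$ factors, if any, appear first, the construction of $\Psi_{\underline{a}}$ in the corresponding case of Section 2 applies directly, with parameters $a_k\in\mc^\times$ chosen as prescribed.

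The verification that $\Psi_{\underline{a}}$ really is a Lie algebra homomorphism in each case is already packaged in Lemmas~\ref{L2.1}--\ref{L2.3}: each composition $P_k\circ\Psi_{\underline{a}}$ with the projection onto a simple summand falls under one of those lemmas, so $\Psi_{\underline{a}}=\bigoplus_k P_k\circ\Psi_{\underline{a}}$ is a homomorphism. Combined with the surjectivity asserted in the Proposition, this yields the short exact sequence
\begin{equation*}
0 \longrightarrow I \longrightarrow \gim(M_n) \xrightarrow{\ \Psi_{\underline{a}}\ } \cl \longrightarrow 0,
\end{equation*}
from which $\gim(M_n)/I\cong \cl$ is immediate.

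The genuine obstacle is not located in this theorem but in the surjectivity claim of the Proposition it invokes, which the authors defer to Lemmas~5.1--5.4. The delicate point there is to show that each simple summand $\cl_k$ is fully recovered in the image of the diagonal embedding; this is precisely where the genericity condition $a_k\neq\pm 1$ and $a_k\neq a_j^{\pm 1}$ is used, allowing one to extract the individual components $e_{\alpha_i}^{[k]}$, $f_{\alpha_i}^{[k]}$ from linear combinations such as $e_{\alpha_i}^{[k]}-f_{\alpha_{n+i}}^{[k]}$ by Vandermonde-type arguments against the eigenvalues produced by iterated brackets. Once that is granted, the present theorem is a one-line application of the first isomorphism theorem.
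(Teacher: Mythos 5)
Your proposal is correct and takes essentially the same route as the paper: the paper also proves Theorem 3.1 by exhibiting an epimorphism from $\gim(M_n)$ onto each algebra of type $M(n,{\bf a},{\bf c},{\bf d})$ and quotienting by its kernel, with the real work being the case-by-case surjectivity argument (the Vandermonde-type extraction under the genericity conditions $a_k\neq\pm1$, $a_k\neq a_j^{\pm1}$) carried out in Lemmas 5.1--5.4. The only cosmetic difference is that you invoke $\Psi_{\underline{a}}$ from Proposition 2.5 directly, while the paper's Section 5 reruns the same construction with the explicit evaluation maps $\psi_{\underline{a}}$ of Proposition 4.2 --- the paper itself notes that the surjectivity proof of Proposition 2.5 is ``very similar to'' Lemmas 5.1--5.4, so these are one argument in two presentations.
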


\begin{theorem}
If there exists an ideal $I$ of $\gim(M_n)$, such that $\gim(M_n)/I$ is finite-dimensional and semi-simple, then $\gim(M_n)/I$ is   of type $M(n, {\bf a}, {\bf c}, {\bf d})$.
\end{theorem}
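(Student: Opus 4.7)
The plan is to decompose $\cl:=\gim(M_n)/I$ into simple ideals and then to classify each simple summand, exploiting the two $A_{n-1}$-type subdiagrams of the Dynkin diagram of $M_n$. Write $\cl=\cl_1\oplus\cdots\oplus\cl_K$ with each $\cl_k$ simple, and let $\phi_k\colon\gim(M_n)\twoheadrightarrow\cl_k$ be the composition of the quotient map with the $k$-th projection. Setting $E_i:=\phi_k(e_i)$, $F_i:=\phi_k(f_i)$, $H_i:=\phi_k(h_i)$, it suffices to show that each $\cl_k$ is isomorphic to $sl_{2n}$, $sp_{2n}$, or $so_{2n}$.

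The subdiagrams $\{1,\ldots,n-1\}$ and $\{2,\ldots,n\}$ of the Dynkin diagram of $M_n$ are both of type $A_{n-1}$, so the subalgebras $\cl_k^L:=\langle E_i,F_i\mid 1\le i\le n-1\rangle$ and $\cl_k^R:=\langle E_i,F_i\mid 2\le i\le n\rangle$ are each homomorphic images of the Kac-Moody algebra $sl_n$; since $sl_n$ is simple, each is either $0$ or isomorphic to $sl_n$. If $\cl_k^L=0$ then $H_1=[E_1,F_1]=0$, and (R1) with $m_{1,n}=1$ forces $E_n=[H_1,E_n]=0$ and $F_n=-[H_1,F_n]=0$; hence every generator of $\cl_k$ vanishes, contradicting $\cl_k\neq 0$. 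The same argument yields $\cl_k^R\neq 0$, so $\cl_k^L\cong\cl_k^R\cong sl_n$, overlapping in $\langle E_i,F_i\mid 2\le i\le n-1\rangle\cong sl_{n-1}$.

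It remains to classify simple Lie algebras $\cl_k$ that accommodate two such copies of $sl_n$ glued along an $sl_{n-1}$, together with generators $E_n,F_n,E_1,F_1$ satisfying the positive-edge relations (R3). Choose a Cartan subalgebra of $\cl_k$ containing $\span\{H_1,\ldots,H_n\}$ and decompose $\cl_k$ into weight spaces. The relations $[H_i,E_n]=m_{i,n}E_n$ for $1\le i\le n-1$ give $E_n$ the $sl_n$-weight $\omega_1-\omega_{n-1}$ (first minus last fundamental weight), which is not a root of $sl_n$, so $E_n$ lies in a specific weight space of $\cl_k$ outside $\cl_k^L$. Combined with $[E_1,E_n]=[F_1,F_n]=0$, $(\ad E_1)^2F_n=(\ad E_n)^2F_1=0$, and $(\ad F_1)^2E_n=(\ad F_n)^2E_1=0$, this pins $E_n$ down as a root vector of a specific type in the root system of $\cl_k$. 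A case analysis over simple Lie algebras whose root system admits this configuration, together with the explicit constructions of Lemmas \ref{L2.1}--\ref{L2.3} for existence, shows that the only possibilities are $sl_{2n}$, $sp_{2n}$, and $so_{2n}$; all other simple types (including the exceptional ones) are excluded on rank or root-count grounds.

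The main obstacle is this last classification step: extracting from the abstract relations (R1)--(R3) enough root-system information to rule out every simple Lie algebra not of the three listed types. One expects to reverse the explicit weight calculations of Lemmas \ref{L2.1}--\ref{L2.3}, producing enough structure constants for $\cl_k$ to read off its Dynkin diagram directly. The case distinctions---rank $2n-1$ for $sl_{2n}$ versus rank $n$ for $sp_{2n}$ and $so_{2n}$---will correspond to how the two copies of $sl_n$ are positioned relative to a Cartan of $\cl_k$, namely diagonally inside $sl_n\oplus sl_n\subset sl_{2n}$ on the one hand, or overlapping in rank $n$ in the other two cases.
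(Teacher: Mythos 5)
Your reduction to simple summands $\cl_k$ and the identification of the two $A_{n-1}$-subalgebras $\cl_k^L,\cl_k^R\cong sl_n$ are correct, but the proof has a genuine gap at exactly the point you flag as ``the main obstacle'': the classification of a simple $\cl_k$ containing two copies of $sl_n$ glued along $sl_{n-1}$ is asserted, not proved, and your one concrete structural claim about it is false. You say the relations pin $E_n$ down ``as a root vector of a specific type in the root system of $\cl_k$.'' In the case $\cl_k\cong sl_{2n}$ this fails: there (Lemma \ref{L2.1}) $\cl_k^L$ is the \emph{diagonal} $sl_n$ inside $sl_n\oplus sl_n\subset sl_{2n}$, the torus $\span\{H_1,\ldots,H_n\}$ has dimension $n$ inside a Cartan subalgebra of dimension $2n-1$, so its weight spaces are sums of several root spaces, and indeed $E_n=e_{\alpha_n}-f_{\alpha_{2n}}$ is a sum of two root vectors for two \emph{different} roots of $\cl_k$ that merely agree on that torus. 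Hence the configuration you must classify includes non-root-vector solutions, and no rank or root-count case analysis of the kind you sketch is available; making this step rigorous is essentially the entire content of the theorem, and your text itself only describes what ``one expects.''

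Moreover, even granting the summand-wise classification, the argument cannot prove the theorem as stated, because type $M(n,{\bf a},{\bf c},{\bf d})$ constrains \emph{multiplicities}: ${\bf c},{\bf d}\in\{0,1\}$, so for instance $sp_{2n}\oplus sp_{2n}$ must be excluded even though each summand separately passes your test. The exclusion is a statement about \emph{simultaneous} surjectivity --- any two surjections of $\gim(M_n)$ onto $sp_{2n}$ differ by an automorphism, so their joint image is a twisted diagonal rather than the direct sum --- and a one-summand-at-a-time analysis cannot see this coupling. The paper proves the theorem by a different mechanism that handles both difficulties at once: via Berman's isomorphism $\phi:\gim(M_n)\rightarrow\hat{\sg}_{fp}\subset\hat{\sg}$ (Remark 2) it transports $I$ into the affine algebra $A_{2n-1}^{(1)}$, shows finite-dimensionality forces $(H_i-H_{n+i})\otimes\theta(t)\in\phi(I)$ with $\theta(t)=\eta(t+t^{-1})$, so the quotient factors through $\hat{\sg}/\theta^*(t)$; semisimplicity allows $\theta^*$ to be taken square-free with $\theta^*(0)\not=0$ and root set stable under $a\mapsto a^{-1}$; then Proposition \ref{P4.2} together with Lemmas \ref{L6.4} and \ref{L6.5} identifies the image: each pair $\{a_k,a_k^{-1}\}$ with $a_k\not=\pm1$ contributes one copy of $sl_{2n}$, while $a_k=1$ and $a_k=-1$ contribute $sp_{2n}$ and $so_{2n}$ respectively, each at most once precisely because $t\mp1$ divides the square-free $\theta^*$ at most once. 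To salvage your plan you would need to show that every surjection from $\gim(M_n)$ onto a simple Lie algebra factors through an evaluation map --- which in effect reconstructs the paper's affine realization.
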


The above two theorems will be proved in below sections.

\begin{theorem}
(1) If $V$ is an irreducible finite-dimensional module of an $M(n, {\bf a}, {\bf c}, {\bf d})$ type  Lie algebra, then $V$ is an irreducible $\gim(M_n)$-module.

(2) If $V$ is an irreducible $\gim(M_n)$-module with finite dimension, then $V$ is an irreducible module of  some $M(n, {\bf a}, {\bf c}, {\bf d})$ type Lie algebra.
\end{theorem}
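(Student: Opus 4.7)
Part (1) is essentially tautological given the constructions of Section 2: for $V$ an irreducible finite-dimensional module over a Lie algebra $\cl$ of type $M(n,{\bf a},{\bf c},{\bf d})$, I pull the action back along an epimorphism $\Psi_{\underline{a}}:\gim(M_n)\twoheadrightarrow\cl$ whose existence is guaranteed by Theorem 3.1, setting $x\cdot v:=\Psi_{\underline{a}}(x)v$. Because $\Psi_{\underline{a}}$ is surjective, the images of $\gim(M_n)$ and $\cl$ in $\mathrm{End}(V)$ coincide, so the $\gim(M_n)$-submodules of $V$ are exactly the $\cl$-submodules and irreducibility transfers. No further computation is required.

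For part (2), the plan is to reduce to Theorem 3.2 by proving that the image of an irreducible representation is semisimple. Let $\rho:\gim(M_n)\to\mathrm{End}(V)$ be given, put $I:=\ker\rho$, and set $\mathfrak{g}:=\gim(M_n)/I$, so $\mathfrak{g}$ acts faithfully and irreducibly on $V$. I would first observe that $\gim(M_n)$ is perfect: from relation (R1) one has $e_i=\frac12[h_i,e_i]$, $f_i=-\frac12[h_i,f_i]$ and $h_i=[e_i,f_i]$, so every generator lies in the derived subalgebra; hence $\gim(M_n)=[\gim(M_n),\gim(M_n)]$, and $\mathfrak{g}=[\mathfrak{g},\mathfrak{g}]$ as well. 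Let $R$ be the solvable radical of $\mathfrak{g}$. By the standard corollary of Lie's theorem for irreducible modules over $\mc$, $R$ acts on $V$ by scalars through a character $\lambda:R\to\mc$. Since $R\subseteq\mathfrak{g}=[\mathfrak{g},\mathfrak{g}]$, each $r\in R$ is a finite sum of commutators in $\mathfrak{g}$, and trace-of-commutator-is-zero in $\mathrm{End}(V)$ forces $\lambda(r)\dim V=\mathrm{tr}(r|_V)=0$, so $\lambda\equiv 0$ and $R$ acts trivially on $V$. Faithfulness gives $R=0$, hence $\mathfrak{g}$ is semisimple. Theorem 3.2 now identifies $\mathfrak{g}$ with a Lie algebra of type $M(n,{\bf a},{\bf c},{\bf d})$, over which $V$ is manifestly irreducible (the subspace lattices for $\gim(M_n)$ and $\mathfrak{g}$ agree on $V$).

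The only step that is not purely formal is the invocation of Lie's theorem for irreducible modules, which rests on the standard argument that for every weight $\mu$ of $R$ appearing in $V$ the weight space $V_\mu$ is $\mathfrak{g}$-stable, via the fact that $\mu$ vanishes on $[\mathfrak{g},R]$ (obtained by examining $r$ on a finite-dimensional $\mathfrak{g}$-invariant subspace). This is well known and needs no new input, so I do not expect a genuine obstacle: the bulk of the work for Theorem 3.3 has already been absorbed into Theorems 3.1 and 3.2, and what remains is only to combine perfectness of $\gim(M_n)$ with Lie's theorem and the trace identity.
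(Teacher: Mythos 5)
Your proposal is correct, and part (1) coincides with the paper's own argument: both pull the $\cl$-action back along the epimorphism $\Psi_{\underline{a}}$ (the paper cites Proposition 2.5) and note that surjectivity makes the submodule lattices of $V$ over $\gim(M_n)$ and over $\cl$ coincide. In part (2) you reach the same destination --- reduction to Theorem 3.2 by showing the acting algebra is semi-simple --- but by a genuinely different mechanism. The paper decomposes the image as $Im(\kappa)=S\dot+H\dot+W$ with $S$ semi-simple, $H$ diagonal and $W$ nilpotent; it kills $W$ by observing that $W\cdot V$ is a proper submodule of the irreducible $V$ (hence zero, and then $W=0$ since $Im(\kappa)\subseteq End(V)$ acts faithfully), and it retains the central part $H$, arguing that $\kappa(\sh)$ and $\kappa(H)$ are simultaneously diagonalizable so that irreducibility over $S$ and over $\gim(M_n)$ agree, before applying Theorem 3.2 to $S$. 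You instead use perfectness of $\gim(M_n)$ --- immediate from $m_{i,i}=2$ in (R1), exactly as you compute --- combined with the Lie's-theorem corollary (the radical acts by a character on a finite-dimensional irreducible module over $\mc$) and the trace-of-commutator identity to annihilate the entire radical at once, then invoke faithfulness. Your route buys two things: it is more rigorous than the paper's somewhat loosely stated decomposition (the radical of a linear Lie algebra need not split as diagonal-plus-nilpotent in general; irreducibility is what makes it work, precisely via your character argument), and it is slightly stronger, since it shows no central summand $H$ can survive at all (any such $H$ lies in $[\mathfrak{g},\mathfrak{g}]$ and acts by traceless scalars, hence by zero), which makes the paper's separate diagonalization step unnecessary.

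One small point to patch: your argument tacitly needs $\rho\not=0$, since for the one-dimensional trivial module one gets $\mathfrak{g}=0$, which is not ``non-trivial semi-simple'' as Definition 2.1 requires; the statement still holds in that case because the trivial module is an irreducible module of any $M(n,{\bf a},{\bf c},{\bf d})$ algebra. The paper makes the same silent restriction (``Suppose that $V$ is non-trivial''), so this is a one-line remark to add, not a gap.
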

\begin{proof}

(1)  Suppose that $V$ is an irreducible finite-dimensional module of $M(n, {\bf a}, {\bf c}, {\bf d})$ type  Lie algebra $\cl$,  and that $\zeta:\cl\rightarrow End(V)$ is the representation map. By Proposition 2.5,  $V$ is an irreducible $\gim(M_n)$-module with representation map $\zeta\circ\Psi_{\underline{a}}$.

(2)  Suppose that $V$ is non-trivial and  the representation is define by $\kappa: \gim(M_n)\rightarrow End(V)$ and the image of $\kappa$ has Levi decomposition
$$Im(\kappa)=S\dot+H\dot+W,$$
where $S$ is semi-simple, $H$ is diagonal and $W$ is nilpotent. Undoubtedly, it holds that $[S, H]=0$.

Because  $V$ is non-trivial and there exists a positive integer $k$ such that $W^k$ trivially acts on $V$,  we infer that $W\cdot V$ is a proper submodule and we may assume that $W=0$.

Suppose that $\sh$ is a Cartan subalgebra of $S$. Then $V$ has a basis such that both $\kappa(\sh)$ and $\kappa(H)$ are diagonal, then any $S$-module must be an $H$-module. So $V$ is irreducible as $\gim(M_n)$-module if and only if $V$ is irreducible as $S$-module. By Theorem 3.2, $S$ has to be isomorphic to some  $M(n, {\bf a}, {\bf c}, {\bf d})$ type  Lie algebra, and thus the statement holds.
\end{proof}

\section{Evaluation representations of $A_{2n-1}^{(1)}$}

Let $\sg$ be the  simple Lie algebra $sl_{2n}(\mc)$. Then the affine Lie algebra of type $A_{2n-1}^{(1)}$ has a realization:
$$\hat{\sg}=\sg\otimes\mc[t,t^{-1}]\oplus\mc c,$$
with  bracket:
$$[c,\hat{\sg}]=0,\quad [x\otimes t^m, y\otimes t^k]=[x,y]\otimes t^{m+k}+m\delta_{m,-k}(x,y)c,$$
where $x,y\in\sg, m,k\in\mz$ and $(,)$ is a non-degenerate invariant form on $\sg$, which is a scalar of the Killing form.

The following is the Dynkin diagram of $\sg$:

\centerline{\xymatrix{^1\circ\ar@{-}[r]&^2\circ\ar@{-}[r]&^3\circ-&-\circ\ar@{-}[r]&\circ^{2n-1}}}
Let $\Pi=\{\alpha_1,\cdots, \alpha_{2n-1}\}$ be the associated prime root system,  and $\dot\Delta$ be its root system.  Set $\{E_\alpha, H_i|\alpha\in\dot\Delta, 1\leq i\leq 2n-1\}$ be the Chevalley basis of $\sg$. Then elements $E_{\pm\alpha_i}\otimes 1(1\leq i\leq 2n-1)$ and $E_{\pm\alpha^\flat}\otimes t^\mp$ are Chevalley generators of $\hat{\sg}$, where $\alpha^\flat=\alpha_1+\cdots+\alpha_{2n-1}$.

Set
\beqs &\hat e_1=E_{\alpha_1}\otimes 1-E_{-\alpha_{n+1}}\otimes 1,\quad \hat f_1=E_{-\alpha_1}\otimes 1-E_{\alpha_{n+1}}\otimes 1, \quad\hat h_1=H_1\otimes 1-H_{n+1}\otimes 1, \\
&\hat e_2=E_{\alpha_2}\otimes 1-E_{-\alpha_{n+2}}\otimes 1,\quad  \hat f_2=E_{-\alpha_2}\otimes 1-E_{\alpha_{n+2}}\otimes 1, \quad \hat h_2=H_2\otimes 1-H_{n+2}\otimes 1, \\
&\cdots\\
&\hat e_n=E_{\alpha_n}\otimes 1+E_{\alpha^\flat}\otimes t^{-1}, \quad \hat f_n=E_{-\alpha_n}\otimes 1+E_{-\alpha^\flat}\otimes t,\quad \hat h_n=H_n\otimes 1+(H_1+\cdots+H_{2n-1})\otimes 1-c, \eeqs
and
let $\hat{\sg}_{fp}$ be the subalgebra generated by $\{\hat e_i,\hat f_i|1\leq i\leq n\}$. $\hat{\sg}_{fp}$ is called a fixed point subalgebra of $\hat{\sg}$.

\begin{proposition}\label{L4.1}
There exists an algebra homomorphism $\phi: \cu(\gim(M_n))\rightarrow \cu(\hat{\sg}_{fp})$ via the action on generators:
  \beqs \phi(e_i)=\hat e_i,\quad  \phi(f_i)=\hat f_i,\quad 1\leq i\leq n.\eeqs
  More over, it induces  a Lie algebra homomorphism (also denoted by $\phi$) from $\gim(M_n)$ to $\hat{\sg}_{fp}$.
\end{proposition}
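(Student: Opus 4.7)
The plan is to verify that the assignment $\phi(e_i) = \hat e_i$, $\phi(f_i) = \hat f_i$ for $1 \leq i \leq n$ respects all the defining relations (R1)--(R3) of $\gim(M_n)$. Once the images are shown to satisfy these relations inside $\cu(\hat{\sg}_{fp})$, the universal property of $\cu(\gim(M_n))$ delivers the desired algebra map $\phi$; because each defining relation is a Lie bracket identity, restricting $\phi$ to the sub-Lie algebra generated by the $e_i, f_i$ automatically produces a Lie algebra homomorphism $\gim(M_n) \to \hat{\sg}_{fp}$.

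The verifications split naturally according to whether the index $n$ is involved. For $1 \leq i,j \leq n-1$, the images $\hat e_i, \hat f_i$ all lie in $\sg \otimes 1 \subset \hat{\sg}$, and in fact inside the subalgebra generated by $E_{\pm \alpha_k} \otimes 1$ for $k \neq n$, which is isomorphic to $sl_n \oplus sl_n$. The assignment $e_i \mapsto E_{\alpha_i} \otimes 1 - E_{-\alpha_{n+i}} \otimes 1$ then coincides with the diagonal embedding $\Phi_1 \oplus \Phi_2$ of Lemma~\ref{L2.1}, so every relation among $\hat e_i, \hat f_i, \hat h_i$ with indices $<n$ is already verified there. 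This reduction lets me focus on the relations involving the $n$-th generators.

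The central new computation is $[\hat e_n, \hat f_n]$. Expanding,
\beqs [\hat e_n, \hat f_n] &=& [E_{\alpha_n}, E_{-\alpha_n}] \otimes 1 + [E_{\alpha^\flat} \otimes t^{-1}, E_{-\alpha^\flat} \otimes t], \eeqs
since the cross terms $[E_{\alpha_n}, E_{-\alpha^\flat}]$ and $[E_{\alpha^\flat}, E_{-\alpha_n}]$ vanish because $\pm(\alpha_n - \alpha^\flat)$ is not a root of $A_{2n-1}$. The affine bracket provides the central contribution $-c$ for a suitably normalized invariant form, and the remainder combines to $(H_1 + \cdots + H_{2n-1}) \otimes 1$, matching exactly the definition of $\hat h_n$. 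The Cartan-type relations $[\hat h_i, \hat e_n]$, $[\hat h_n, \hat e_j]$, etc., then reduce to evaluating $\alpha_k$ and $\alpha^\flat$ on the $\hat h_i$; routine Cartan-matrix arithmetic (noting that every root annihilates $c$) produces the required coefficients $\pm m_{i,j}$.

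The step I expect to be the main obstacle is the Serre-type relations (R3) attached to the positive entries $m_{1,n} = m_{n,1} = 1$: one must establish $[\hat e_1, \hat e_n] = 0$, $(\ad \hat e_1)^2 \hat f_n = 0$, and their analogues obtained by exchanging $e \leftrightarrow f$ and $1 \leftrightarrow n$. Each successive application of $\ad$ produces several cross terms of the form $[E_{\pm \alpha_*} \otimes t^a, E_{\pm \alpha_*} \otimes t^b]$, and the plan is to show that every weight that appears --- for instance $\alpha_1 + \alpha_n$, $\alpha_1 + \alpha^\flat$, $-\alpha_{n+1} + \alpha^\flat$, $\alpha_1 - \alpha_n - \alpha_{n+1}$, etc.--- either mixes signs in its simple-root expansion or has a coefficient of absolute value $\geq 2$, hence fails to be a root of $A_{2n-1}$, so each bracket vanishes. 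The (R2) relations for non-adjacent pairs in the Dynkin diagram of $M_n$ are then immediate because the relevant simple roots are orthogonal, and the pair $(n-1,n)$ is handled by the same root-theoretic argument together with the vanishing of $[E_{\alpha^\flat} \otimes t^{-1}, E_{\pm \alpha_{n-1}} \otimes 1]$.
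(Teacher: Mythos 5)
Your strategy coincides with the paper's own: the published proof of Proposition~4.1 consists precisely of asserting that $\hat e_i,\hat f_i,\hat h_i$ satisfy the list of relations (R1)--(R3), and you verify exactly that list, supplying the details the paper leaves implicit --- the reduction of all relations with indices $<n$ to Lemma~\ref{L2.1} via the copy of $sl_n\oplus sl_n$ sitting in $\sg\otimes 1$, the computation $[\hat e_n,\hat f_n]=\hat h_n$ with the central contribution $-c$, and root-theoretic vanishing of the remaining brackets. In substance this is correct and is the same argument.

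One concrete repair is needed in your non-root test. The criterion ``mixes signs in its simple-root expansion or has a coefficient of absolute value $\geq 2$'' is not sufficient in type $A_{2n-1}$: the weight $\alpha^\flat-\alpha_{n+1}=\alpha_1+\cdots+\alpha_n+\alpha_{n+2}+\cdots+\alpha_{2n-1}$, which arises from the term $[E_{-\alpha_{n+1}}\otimes 1,\,E_{\alpha^\flat}\otimes t^{-1}]$ in $[\hat e_1,\hat e_n]$, has all coefficients in $\{0,1\}$ and one sign, yet is not a root; the same applies to $\alpha_n-\alpha^\flat$ in the cross terms of $[\hat e_n,\hat f_n]$. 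The criterion you actually need is the standard description of the roots of $A_{2n-1}$ as $\pm(\alpha_i+\cdots+\alpha_j)$: a $\{0,\pm1\}$-weight fails to be a root exactly when its support is not a consecutive interval, and with this clause added every vanishing you invoke goes through. Relatedly, your disposal of the pair $(n-1,n)$ via ``the vanishing of $[E_{\alpha^\flat}\otimes t^{-1},E_{\pm\alpha_{n-1}}\otimes 1]$'' is insufficient as stated, because $\hat e_{n-1}$ also contains $-E_{-\alpha_{2n-1}}\otimes 1$ and $[E_{\alpha^\flat},E_{-\alpha_{2n-1}}]\neq 0$ (its weight $\alpha_1+\cdots+\alpha_{2n-2}$ \emph{is} a root); indeed $[\hat e_{n-1},\hat e_n]\neq 0$, consistent with the paper's later computation of $[x_{n-1},x_n]$. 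So for this pair you must compute the double bracket $(\ad\hat e_{n-1})^2\hat e_n$ termwise, where each resulting weight (e.g.\ $\alpha^\flat-2\alpha_{2n-1}$, $2\alpha_{n-1}+\alpha_n$) is killed by the corrected criterion. These are fixable imprecisions in justification, not flaws in the approach.
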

 \begin{proof}
 This is a consequence of the following equations:
 \beqs
 [\hat e_{i}, \hat f_{i}]=\hat h_i, &&i=1,\cdots,n,\\
 {[\hat h_i, \hat e_{j}]}=m_{i,j}\hat e_{j},  [\hat h_i, \hat f_{j}]=-m_{i,j}\hat f_{j}, &&1\leq i,j\leq n,\\
 {[\hat e_i,\hat f_j,]}=0,&&i\not=j, \{i,j\}\not=\{1,n\},\\
 {[\hat e_i, [\hat e_i, \hat e_{j}]]}=0,  {[\hat f_i, [\hat f_i, \hat f_{j}]]}=0,&& |i-j|=1,\\
 {[\hat e_i, \hat e_{j}]}=0,  {[\hat f_i, \hat f_{j}]}=0,&& |i-j|>1,\\
  {[\hat e_i,[\hat e_i,\hat f_j,]]}=[\hat f_i,[\hat f_i,\hat e_j,]]=0,&& \{i,j\}=\{1,n\}. \eeqs
 \end{proof}

{\noindent\bf Remark 2.} {\it In fact, the map $\phi$ is an isomorphism (see \cite{Br}). More over, if we set
\beqs \hat e_n=E_{\alpha_n}\otimes 1-E_{\alpha^\flat}\otimes t^{-1}, \quad \hat f_n=E_{-\alpha_n}\otimes 1-E_{-\alpha^\flat}\otimes t, \eeqs
then Proposition 4.1 still holds.  However, these two maps are equivalent under the automorphism of $\hat{sl}_{2n}$ induced by $x\otimes t^m\mapsto x\otimes (-t)^m$. }

Let $a\in\mc^\times$, an  irreducible evaluation $\hat{\sg}$-module $V_{\lambda,a}$ is an irreducible highest weight $\sg$-module with highest weight $\lambda$, and the action is defined by:
\beqs (x\otimes t^m)\cdot v=a^mx\cdot v, &c\cdot v=0.\eeqs
for all $x\in\sg, m\in\mz$ and $v\in V_{\lambda,a}$.

For any $V_{\lambda,a}$,  the  map $\phi$ induces a representation for $\gim({M_n})$. Particularly, for any dominate weight $\lambda$, $V_{\lambda,a}$ is finite-dimensional.

Let $V_{\lambda,a}$ be the $2n$-dimensional  irreducible module, i.e., the natural representation of $\sg$. Particularly, we may assume that $\sg=sl_{2n}$. The
following proposition is an important tool of the proof for our main results.

\begin{proposition}\label{P4.2}
The following map induces a representation for $\gim(M_n)$:
\beqs &\psi_a: &\gim(M_n)\rightarrow sl_{2n}\\
&&\psi_a(e_1)=E_{1,2}-E_{n+2,n+1},\quad\psi_a(f_1)=E_{2,1}-E_{n+1,n+2}\\
&&\psi_a(e_2)=E_{2,3}-E_{n+3,n+2},\quad\psi_a(f_2)=E_{3,2}-E_{n+2,n+3}\\
&&\cdots\\
&&\psi_a(e_{n-1})=E_{n-1,n}-E_{2n,2n-1},\quad\psi_a(f_{n-1})=E_{n,n-1}-E_{2n-1,2n}\\
&&\psi_a(e_n)=E_{n,n+1}+a^{-1}E_{1,2n},\quad\psi_a(f_{n})=E_{n+1,n}+aE_{2n,1}.\eeqs
Particularly, if $a\not=\pm1$, the image of $\psi_a$ is $sl_{2n}$, if $a=1$, the image of $\psi_a$ is $sp_{2n}$, and if $a=-1$, then the image of $\psi_a$ is $so_{2n}$.
\end{proposition}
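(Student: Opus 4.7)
The plan. First recognize $\psi_a$ as the composition $\mathrm{ev}_a \circ \phi$, where $\phi$ is the homomorphism from Proposition \ref{L4.1} and $\mathrm{ev}_a: \hat{\sg} \to \mathrm{End}(\mc^{2n})$ is the evaluation representation sending $x\otimes t^m \mapsto a^m x$ (acting naturally on $\mc^{2n}$) and $c\mapsto 0$. Since the central term in the $\hat{\sg}$-bracket is killed and scalar evaluation respects the remaining bracket, $\mathrm{ev}_a$ is a Lie algebra homomorphism; a direct comparison on each of the generators $\hat e_i,\hat f_i$ of $\hat{\sg}_{fp}$ gives $\mathrm{ev}_a\circ\phi=\psi_a$, for instance $\mathrm{ev}_a(\hat e_n)=E_{n,n+1}+a^{-1}E_{1,2n}=\psi_a(e_n)$. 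Hence $\psi_a$ is a Lie algebra homomorphism.

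To analyze the image, let $\mathcal{S}\subset sl_{2n}$ be the subalgebra generated by $\{\psi_a(e_i),\psi_a(f_i):1\leq i\leq n-1\}$. These generators do not depend on $a$, and a short computation identifies $\mathcal{S}$ with the diagonal copy of $sl_n$ under $Y\mapsto \mathrm{diag}(Y,-Y^\top)$. Write an element of $sl_{2n}$ in $n\times n$ block form as $\begin{pmatrix} A & B \\ C & D \end{pmatrix}$. Under the adjoint action of $\mathcal{S}$, each of the off-diagonal blocks $B$ and $C$ decomposes into its symmetric and antisymmetric parts (with respect to $n\times n$ transpose), and each summand is irreducible as an $sl_n$-module.

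Now $\psi_a(e_n)=E_{n,n+1}+a^{-1}E_{1,2n}$ contributes to the $B$-block the matrix whose nonzero entries are at positions $(n,1)$ and $(1,n)$ with coefficients $1$ and $a^{-1}$; its symmetric part vanishes iff $a=-1$, and its antisymmetric part vanishes iff $a=1$. By irreducibility, the $\mathcal{S}$-orbit of $\psi_a(e_n)$ spans the whole $B$-block when $a\neq\pm 1$, only its symmetric part when $a=1$, and only its antisymmetric part when $a=-1$; parallel statements hold for $\psi_a(f_n)$ in $C$. The remaining diagonal directions beyond $\mathcal{S}$ are produced by brackets $[E_{i,n+j},E_{n+j,i}]=E_{i,i}-E_{n+j,n+j}$, which lie in the image since the two factors do. Matching with the standard matrix descriptions of $sp_{2n}$ (preserving $\begin{pmatrix} 0 & I_n \\ -I_n & 0 \end{pmatrix}$, so $B,C$ symmetric) and $so_{2n}$ (preserving $\begin{pmatrix} 0 & I_n \\ I_n & 0 \end{pmatrix}$, so $B,C$ antisymmetric) reads off the three cases: $\mathrm{Im}(\psi_a)=sl_{2n}$ for $a\neq\pm1$, $sp_{2n}$ for $a=1$, and $so_{2n}$ for $a=-1$.

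Main obstacle. The key observation is that the coefficients $(1,a^{-1})$ in $\psi_a(e_n)$ govern the symmetric/antisymmetric split of its $B$-block, so the trichotomy in $a$ is controlled entirely by the vanishing of $1\pm a^{-1}$. Once that connection is made, each case is settled by the irreducibility of $\mathrm{Sym}^2$ and $\Lambda^2$ pieces as $sl_n$-modules together with the bracket recovery of the extra Cartan directions. Containment in $sp_{2n}$ or $so_{2n}$ in the special cases can additionally be verified by checking directly that each generator preserves the displayed bilinear form, which serves as a useful sanity check; but equality with the target subalgebra requires the irreducibility argument, which is the only non-routine step in the proof.
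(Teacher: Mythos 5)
Your first step coincides with the paper's own route: there, too, $\psi_a$ arises as the evaluation at $t=a$ of the map $\phi$ of Proposition 4.1 on the natural $2n$-dimensional module $V_{\lambda,a}$, so the homomorphism property is not where the work lies. For the image, your argument is genuinely different. The paper computes explicit iterated brackets, obtaining $[x_1,[x_2\cdots,[x_{n-1},x_n]\cdots]]=(1+a^{-1})E_{1,n+1}$ and $[[\cdots[y_n,y_{n-1}],\cdots y_2],y_1]=(1+a)E_{n+1,1}$, and then argues case by case: for $a\neq\pm1$ it chases matrix units until all Chevalley generators $E_{i,i+1},E_{i+1,i}$ of $sl_{2n}$ appear; for $a=1$ (resp.\ $a=-1$) it exhibits explicit Chevalley generators of type $C_n$ (resp.\ $D_n$) and concludes via the sandwich $\langle S,{\bf x}_1,{\bf y}_1\rangle\subseteq Im(\psi_a)\subseteq sp_{2n}$ (resp.\ the $so_{2n}$ analogue) plus a dimension count. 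Your replacement of the bracket-chasing by the $\mathcal{S}$-module decomposition of the off-diagonal blocks into $\mathrm{Sym}^2\mc^n\oplus\Lambda^2\mc^n$ is more structural: the paper's coefficients $1+a^{\pm1}$ are exactly your vanishing criteria for the symmetric and antisymmetric components, so your argument explains the trichotomy in $a$ rather than merely verifying it, and it treats the three cases uniformly where the paper needs three separate computations.

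However, your final assembly of the block-diagonal part is stated too loosely at two points. First, for $a\neq\pm1$, the span of $\mathcal{S}$, the two full off-diagonal blocks, and your named brackets $[E_{i,n+j},E_{n+j,i}]=E_{i,i}-E_{n+j,n+j}$ has dimension $3n^2+n-1<4n^2-1=\dim sl_{2n}$: those brackets yield only the $2n-1$ traceless diagonal directions, while the block-diagonal part of $sl_{2n}$ also contains directions such as $E_{i,i'}$ alone (as opposed to $E_{i,i'}-E_{n+i',n+i}\in\mathcal{S}$). The cure is immediate within your framework: $[E_{i,n+j},E_{n+j,i'}]=E_{i,i'}$ for $i\neq i'$, i.e.\ the bracket of the two full blocks spans the entire traceless block-diagonal subalgebra. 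Second, for $a=-1$ your justification ``which lie in the image since the two factors do'' is false: $E_{i,n+j}$ alone is not in $so_{2n}$, only the antisymmetrized combinations are, so you must bracket those instead, e.g.\ $[E_{i,n+j}-E_{j,n+i},\,E_{n+j,i}-E_{n+i,j}]=E_{i,i}+E_{j,j}-E_{n+i,n+i}-E_{n+j,n+j}$ for $i\neq j$, which together with the Cartan of $\mathcal{S}$ recovers every $E_{i,i}-E_{n+i,n+i}$. Relatedly, your count should note that the block-diagonal parts of $sp_{2n}$ and $so_{2n}$ equal $\{\mathrm{diag}(A,-A^{t}):A\in gl_n\}$, one dimension larger than $\mathcal{S}$; for $a=1$ your brackets do apply as written, since $E_{i,n+i}$ is symmetric and hence in the image. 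These are line-level repairs, and the irreducibility idea at the heart of your proof is sound.
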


{\noindent\bf Remark 3.}  {\it In case of assuming that
\beqs \hat e_n=E_{\alpha_n}\otimes 1-E_{\alpha^\flat}\otimes t^{-1}, \quad \hat f_n=E_{-\alpha_n}\otimes 1-E_{-\alpha^\flat}\otimes t, \eeqs
Proposition 4.2 still holds after exchanging the positions of results when $a=1$ and $a=-1$, respectively.
}

\begin{proof} \quad {\bf Proof for Proposition \ref{P4.2}.}

Set $x_i=\psi_a(e_i), y_i=\psi_a(f_i)$ for $1\leq i\leq n$, and $h_i=E_{i,i}-E_{i+1,i+1}-E_{i+n,i+n}+E_{i+1+n,i+1+n}$ for $1\leq i\leq n-1$. Then it is easy to know that the elements $x_i,y_i (1\leq i\leq n-1)$ generate the Lie algebra
$$S=\left\{X=\left[\begin{array}{cc}A&0\\0&-A^t\end{array}\right]\Big| A\in sl_n\right\},$$
which is isomorphic to $sl_n$.

More over,
\beqs [x_{n-1},x_n]&=&E_{n-1, n+1}+a^{-1}E_{1, 2n-1},\\
{[x_{n-2},[x_{n-1},x_n]]}&=&E_{n-2, n+1}+a^{-1}E_{1, 2n-2},\\
{[x_{n-3},[x_{n-2},[x_{n-1},x_n]]]}&=&E_{n-3, n+1}+a^{-1}E_{1, 2n-3},\\
\cdots&\cdots&\cdots\\
{[x_2\cdots,[x_{n-1},x_n]\cdots]}&=&E_{2, n+1}+a^{-1}E_{1, n+2},\\
{[x_1,[x_2\cdots,[x_{n-1},x_n]\cdots]]}&=&(1+a^{-1})E_{1,n+1},\eeqs
\beqs [y_{n},y_{n-1}]&=&E_{n+1, n-1}+aE_{2n-1,1},\\
{[[y_{n},y_{n-1}],y_{n-2}]}&=&E_{n+1, n-2}+aE_{2n-2,1},\\
{[[[y_{n},y_{n-1}],y_{n-2}], y_{n-3}]}&=&E_{n+1, n-3}+aE_{2n-3,1},\\
\cdots&\cdots&\cdots\\
{[\cdots[y_{n},y_{n-1}],\cdots y_2]}&=&E_{n+1, 2}+aE_{n+2,1},\\
{[[\cdots[y_{n},y_{n-1}],\cdots y_2],y_1]}&=&(1+a)E_{n+1,1},\eeqs
then we divide the proof into three cases according to the value of $a$.

(1) {\it Case 1.}\quad  $a\not=\pm1$.

By equations
\beqs [x_1,[x_2\cdots,[x_{n-1},x_n]\cdots]]&=&(1+a^{-1})E_{1,n+1},\\
{[[\cdots[y_{n},y_{n-1}],\cdots y_2],y_1]}&=&(1+a)E_{n+1,1},\eeqs
we have $E_{1,n+1}, E_{n+1,1}\in Im(\psi_a)$, and hence, for every $1\leq i\leq n-1$,
\beqs 2E_{i+1,n+i+1}=[y_i,[y_i, E_{i,n+i}]],\\
2E_{n+i+1,i+1}=[x_i,[x_i, E_{n+i,i}]],\eeqs
thus it holds that
$$E_{i,n+i}, E_{n+i,i}\in Im(\psi_a), \quad \forall 1\leq i\leq n.$$

Set
 \beqs {\bf x}_k=E_{n+k,k}&&{\bf y}_k=E_{k,n+k},\eeqs
then
$$[x_{n}, {\bf x}_n]=E_{2n,n+1}-a^{-1}E_{1,n},\; [y_{n}, {\bf y}_n]=E_{n+1,2n}-aE_{n,1}$$
this implies that $E_{2n, n+1}, E_{n+1,2n}\in Im(\psi_a)$, and hence, $E_{1,n}, E_{n,1}\in Im(\psi_a)$.

Further, the iteration formula
\beqs [y_i, E_{i,n}]=E_{i+1,n}, &&[E_{n,i}, x_i]=E_{n,i+1},\eeqs
 implies that $E_{n,i}, E_{i,n}\in Im(\psi_a)$ for all $i\leq n$, and thus $E_{n+i,2n}, E_{2n,n+i}\in Im(\psi_a)$ for all $i\leq n$.

We also have iteration formula
\beqs [E_{i,n}, y_{n-1}]=E_{i,n-1}, [x_{n-1}, E_{n,i}]=E_{n-1,i},\\
{[E_{i,k+1}, y_{k}]}=E_{i,k}, [x_{k}, E_{k+1,i}]=E_{k,i}, \hbox{\rm\; if\;} i<k,
\eeqs
and this implies that $E_{i,j}, E_{j,i}\in Im(\psi_a)$ for all $i<j\leq n$. Finally, we infer that $E_{i,j}, E_{i+n,j+n}\in Im(\psi_a)$ for all $1\leq i,j\leq n$ and $i\not=j$. Notice that $$[[E_{2,n}, E_{n,2}], x_n]=-E_{n,n+1},\; [[E_{2,n}, E_{n,2}], y_n]=E_{n+1,n},$$
so we have $E_{i,i+1}, E_{i+1,i}\in Im(\psi_a)$ for all $1\leq i\leq 2n-1$, this is a set of Chevalley generators of $sl_{2n}$, and then $Im(\psi_a)=sl_{2n}$.

(2) {\it Case 2.}\quad  $a=1$.

It is easy to check that
\beqs [{\bf x}_1, y_i]=[{\bf y}_1, x_i]=0&&i=1,\cdots,n-1,\\
{[{\bf x}_1, x_i]}=[{\bf y}_1, y_i]=0&&i=2,\cdots,n-1,\\
{[[{\bf x}_1, x_1],x_1]}=2E_{n+2,2}=2{\bf x}_2,&& [[{\bf y}_1, y_1],y_1]=2E_{2,n+2}=2{\bf y}_2,\\
{[{\bf x}_2, x_1]}=[{\bf y}_2, y_1]=0, &&{[{\bf x}_1,[{\bf x}_1, x_1]]}=[{\bf y}_1, [{\bf y}_1, y_1]]=0,
\eeqs
then the Lie algebra generated by $S$ and ${\bf x}_1, {\bf y}_1$ is isomorphic to $sp_{2n}$. In particular, $\{x_i, y_i, {\bf x}_1, {\bf y}_1|1\leq i\leq n-1\}$ is a set of Chevalley generators. The associated Dynkin diagram is

\centerline{\xymatrix{^n\circ\ar@{=>}[r]&^1\circ\ar@{-}[r]&^2\circ\ar@{-}[r]&^3\circ-&-\circ\ar@{-}[r]&\circ^{n-1}}}
Notice that $x_i, y_i\in sp_{2n}$ for all $1\leq i\leq n$, we have
\beqs \langle S, {\bf x}_1, {\bf y}_1\rangle \subset Im(\psi_a)=\langle x_i, y_i |1\leq i\leq n\rangle \subset sp_{2n},\eeqs
the dimension relation $\dim\langle S, {\bf x}_1, {\bf y}_1\rangle=\dim sp_{2n}$ implies that $Im(\psi_a)=sp_{2n}$.

(3) {\it Case 3.}\quad  $a=-1$.
Then $x_n, y_n$ have the form $\left[\begin{array}{cc}0&B\\0&0\end{array}\right]$ and $\left[\begin{array}{cc}0&0\\B&0\end{array}\right]$, respectively, and where $B=-B^t\in gl_n$ is anti-symmetric.

Set
\beqs x^\sharp&=&[\cdots[y_{n},y_{n-1}],\cdots y_2]=E_{n+1,2}-E_{n+2,1},\\
y^\sharp&=&[x_2,\cdots[x_{n-1},x_{n}]\cdots]=E_{2, n+1}-E_{1, n+2},\eeqs
%$Im(\psi_a)$ can be generated by $\{{\bf x}^\sharp, {\bf y}^\sharp, x_i,y_i| 1\leq i\leq n-1\}$. In particular,
we have
\beqs   [{\bf x}^\sharp, y_i]=[{\bf y}^\sharp, x_i]=0,&&i=1,\cdots,n-1,\\
{[{\bf x}^\sharp, x_i]}=[{\bf y}^\sharp, y_i]=0,&&i=1,3,\cdots,n-1,\eeqs
and the set $\{{\bf x}^\sharp, {\bf y}^\sharp, x_2, y_2\}$ generates a simple Lie algebra of type $A_2$.

Then the Lie algebra generated by $S$ and ${\bf x}^\sharp, {\bf y}^\sharp$ is isomorphic to $so_{2n}$. In particular, $\{x_i, y_i, {\bf x}^\sharp, {\bf y}^\sharp|1\leq i\leq n-1\}$ is a set of Chevalley generators.
Notice that $x_i, y_i\in so_{2n}$ for all $1\leq i\leq n$, we have
\beqs \langle S, {\bf x}^\sharp, {\bf y}^\sharp\rangle \subset Im(\psi_a)=\langle x_i, y_i |1\leq i\leq n\rangle \subset so_{2n},\eeqs
the dimension relation $\dim\langle S, {\bf x}^\sharp, {\bf y}^\sharp\rangle=\dim so_{2n}$ implies that $Im(\psi_a)=so_{2n}$.
%Then it is easy to know that the image of $\psi_a$ is of type $D_n$. Actually, \beqs  Im(\psi_a) =\left\{\left[\begin{array}{cc}A&B\\C&-A^t\end{array}\right]\Big| A\in sl_n, B=-B^t, C=-C^t\right\},\eeqs which is $so_{2n}$.
\end{proof}

\section{Proof for Theorem 3.1}

%For all $a\in\mc^\times$, let $\psi_a:\gim(M_n)\rightarrow\sg$ denote the following composition map  \centerline{\xymatrix{ \gim(M_n)\ar@{->}^{\;\;\phi}[r]&\hat{\sg}\ar@{->}^{x\otimes t^m\mapsto a^mx}[rr]&&\sg,}}

Let $\ca=\oplus_{k=1}^K\sg_i=\sg^{\oplus n}$ be the direct sum of $K$ copies of $\sg=sl_{2n}$.  Fix a $K$-tuple $\underline{a}=(a_1,\cdots,a_K)\in(\mc^\times)^K$ such that $a_k\not=a_j^{\pm1}$ for all $k\not=j$. Define the map
\beqs \psi_{\underline{a}}=\bigoplus_{k=1}^K\psi_{a_k}:&&\gim(M_n)\rightarrow\ca,\eeqs
where $\psi_{a_k}:\gim(M_n)\rightarrow\sg_k$ is an analogue of the evaluation map defined in Proposition \ref{P4.2}..

{\it Theorem 3.1 can be proved by the following Lemmas \ref{L5.1}-\ref{L5.4}. }

\begin{lemma}\label{L5.1}
If $a_k\not=\pm1$ for every $k$, then $\psi_{\underline{a}}$ is an epimorphism.
\end{lemma}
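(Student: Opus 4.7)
\noindent\emph{Proof proposal.} My plan is to combine Proposition \ref{P4.2} with a Goursat-type dichotomy for Lie subalgebras of a direct sum of simple Lie algebras. By Proposition \ref{P4.2}, the hypothesis $a_k\neq\pm1$ ensures that $P_k\circ\psi_{\underline{a}}=\psi_{a_k}\colon\gim(M_n)\to\sg_k\cong sl_{2n}$ is surjective for each $k$, so $\mathcal{B}:=\psi_{\underline{a}}(\gim(M_n))$ is a subalgebra of $\ca=\oplus_{k=1}^K\sg_k$ that projects onto every simple factor. The Goursat dichotomy then says that either $\mathcal{B}=\ca$, or else there exist indices $i\ne j$ and a Lie algebra isomorphism $\phi_{ij}\colon\sg_i\to\sg_j$ with $\psi_{a_j}(x)=\phi_{ij}(\psi_{a_i}(x))$ for every $x\in\gim(M_n)$. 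It therefore suffices to rule out the latter alternative under $a_i\ne a_j^{\pm1}$.

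The next step is to pin down the possible $\phi_{ij}$. Because $\psi_a(e_k)$ and $\psi_a(f_k)$ are independent of $a$ for $1\le k\le n-1$, any such $\phi_{ij}$ must fix pointwise the subalgebra $S=\{\mathrm{diag}(A,-A^t)\colon A\in sl_n\}\cong sl_n$ appearing in the proof of Proposition \ref{P4.2}. As an $S$-module $\mc^{2n}=V\oplus V^*$ splits into non-isomorphic irreducibles (using $n\ge 3$), so Schur's lemma identifies the inner centralizer of $S$ in $\mathrm{PGL}(2n)$ as the one-parameter torus $\{[\mathrm{diag}(I_n,\mu I_n)]:\mu\in\mc^\times\}$. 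Combined with the Chevalley outer involution $\omega$, the full group of automorphisms of $sl_{2n}$ fixing $S$ pointwise has two connected components, each a one-dimensional torus.

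Finally, the remaining matching conditions $\phi_{ij}(\psi_{a_i}(e_n))=\psi_{a_j}(e_n)$ and $\phi_{ij}(\psi_{a_i}(f_n))=\psi_{a_j}(f_n)$ translate into matrix-entry identities involving $E_{n,n+1},\,E_{1,2n},\,E_{n+1,n},\,E_{2n,1}$ and the torus parameter $\mu$. A direct bookkeeping then shows that in the inner case these equations force $\mu=1$ and $a_j=a_i$, while in the outer case they force $a_j=a_i^{-1}$. Either conclusion contradicts $a_i\ne a_j^{\pm1}$, so no such $\phi_{ij}$ exists and $\mathcal{B}=\ca$, which is the desired surjectivity.

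I expect the outer-automorphism analysis to be the main obstacle: $\omega(X)=-X^T$ does not fix $S$ literally but swaps the two diagonal blocks, and composing with the block-swap needed to land back in the pointwise stabilizer of $S$ requires careful tracking of how $E_{1,2n}$ and $E_{2n,1}$ get interchanged --- this is precisely what produces the involution $a\mapsto a^{-1}$. An alternative, more hands-on route would be a Vandermonde argument: exhibit $K$ iterated commutators $u_0,\ldots,u_{K-1}\in\gim(M_n)$ whose $k$-th projection has the form $p_j(a_k)\,X^{[k]}$ for a single matrix unit $X$ and polynomials $p_0,\ldots,p_{K-1}$ with $\det(p_j(a_k))_{j,k}\ne 0$, and then invert this system to isolate each simple summand before invoking simplicity together with Proposition \ref{P4.2} to conclude.
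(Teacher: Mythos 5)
Your proposal is correct, but your primary route is genuinely different from the paper's; in fact the paper's own proof is exactly the Vandermonde argument you sketch as the fallback at the end. Concretely, the paper forms the iterated brackets $[x_1^{[k]},[x_2^{[k]}\cdots,[x_{n-1}^{[k]},x_n^{[k]}]\cdots]]=(1+a_k^{-1})E_{1,n+1}^{[k]}$ and $[[\cdots[y_n^{[k]},y_{n-1}^{[k]}],\cdots y_2^{[k]}],y_1^{[k]}]=(1+a_k)E_{n+1,1}^{[k]}$, brackets their sums to obtain $H_{1,n+1}=\sum_{k=1}^K(2+a_k+a_k^{-1})(E_{1,1}-E_{n+1,n+1})^{[k]}\in Im(\psi_{\underline{a}})$, and applies $(\ad H_{1,n+1})^m$ to $\psi_{\underline{a}}(e_1)$ and $\psi_{\underline{a}}(f_1)$; the scalars $2+a_k+a_k^{-1}$ are nonzero because $a_k\neq-1$ and pairwise distinct because equality would force $(a_ka_j-1)(a_k-a_j)=0$, so inverting the Vandermonde system isolates each $x_1^{[k]},y_1^{[k]}$, after which acting by $H_{1,2}^{[k]}=[x_1^{[k]},y_1^{[k]}]$ peels off $x_i^{[k]},y_i^{[k]}$ for all $i$, and Proposition \ref{P4.2} plus simplicity gives $\sg_k\subseteq Im(\psi_{\underline{a}})$ for every $k$. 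Your Goursat route trades these explicit computations for two structural inputs --- the classification of subdirect subalgebras of a direct sum of simple Lie algebras, and the determination of the pointwise stabilizer of $S\cong sl_n$ in $\mathrm{Aut}(sl_{2n})$ (your appeal to $V\not\cong V^*$ is exactly where $n\geq3$ enters) --- and your bookkeeping is right: the inner case forces $\mu=1$ and $a_j=a_i$, while the outer class $\mathrm{Ad}(J)\circ\omega$, with $J$ the block swap, forces $a_j=a_i^{-1}$, both excluded by the standing assumption $a_k\neq a_j^{\pm1}$. What your route buys is conceptual: it explains why $a_i\neq a_j^{\pm1}$ is precisely the right hypothesis, and it correctly predicts the degeneration the paper later exploits --- in the proof of Theorem 3.2 in Section 6 (via Lemma \ref{L6.5}) the roots occur in pairs with $a_ka_j=1$, only the diagonal elements $x_i^{[k]}+x_i^{[j]}$, $y_i^{[k]}+y_i^{[j]}$ survive, and the resulting graph subalgebra is a single copy of $sl_{2n}$, which is exactly your outer case realized. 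What the paper's route buys is self-containedness and reusability: the same explicit commutators are recycled essentially verbatim in Lemmas \ref{L5.2}--\ref{L5.4} and in Section 6, where the dichotomy alone would not hand you the generators needed there. Two small points if you write your version up: the outer component of the pointwise stabilizer is a coset of the torus, not itself a torus; and you should record the $f_n$-matching equations as well --- they are consistent (in the outer case both give $\mu=-a_i^{-1}$ and $a_j=a_i^{-1}$), but the claim needs them checked, not just the $e_n$-equations.
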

\begin{proof}
For convenience, we let $\psi(x)^{[k]}$ denote the image of $x$ under $\psi_{a_k}$.
By the definition of $\psi$, we have that
\beqs
&&\psi_{a_k}(e_1)=(E_{1,2}-E_{n+2,n+1})^{[k]},\quad\psi_{a_k}(f_1)=(E_{2,1}-E_{n+1,n+2})^{[k]}\\
&&\psi_{a_k}(e_2)=(E_{2,3}-E_{n+3,n+2})^{[k]},\quad\psi_{a_k}(f_2)=(E_{3,2}-E_{n+2,n+3})^{[k]}\\
&&\cdots\\
&&\psi_{a_k}(e_{n-1})=(E_{n-1,n}-E_{2n,2n-1})^{[k]},\quad\psi_{a_k}(f_{n-1})=(E_{n,n-1}-E_{2n-1,2n})^{[k]}\\
&&\psi_{a_k}(e_n)=(E_{n,n+1}+a_k^{-1}E_{1,2n})^{[k]},\quad\psi_{a_k}(f_{n})=(E_{n+1,n}+a_kE_{2n,1})^{[k]}.\eeqs
Set
\beqs x_i^{[k]}=\psi_{a_k}(e_i),&& y_i^{[k]}=\psi_{a_k}(f_i).\eeqs Then
\beqs [x_{n-1}^{[k]},x_n^{[k]}]&=&(E_{n-1, n+1}+a_k^{-1}E_{1, 2n-1})^{[k]},\\
{[x^{[k]}_{n-2},[x^{[k]}_{n-1},x^{[k]}_n]]}&=&(E_{n-2, n+1}+a_k^{-1}E_{1, 2n-2})^{[k]},\\
\cdots&\cdots&\cdots\\
{[x_1^{[k]},[x_2^{[k]}\cdots,[x_{n-1}^{[k]},x_n^{[k]}]\cdots]]}&=&(1+a_k^{-1})(E_{1,n+1})^{[k]},\eeqs
\beqs [y_{n}^{[k]},y^{[k]}_{n-1}]&=&(E_{n+1, n-1}+a_k E_{2n-1,1})^{[k]},\\
{[[y_{n}^{[k]},y_{n-1}^{[k]}],y_{n-2}^{[k]}]]}&=&(E_{n+1, n-2}+a_kE_{2n-2,1})^{[k]},\\
\cdots&\cdots&\cdots\\
{[[\cdots[y_{n}^{[k]},y_{n-1}^{[k]}],\cdots y_2^{[k]}],y_1^{[k]}]}&=&(1+a_k)(E_{n+1,1})^{[k]},\eeqs
we infer that
\beqs H_{1,n+1}&:=&\Big[\sum_{k=1}^K(1+a_k^{-1})(E_{1,1+n})^{[k]},\sum_{k=1}^K(1+a_k)(E_{n+1,1})^{[k]}\Big]\\
&=&\sum_{k=1}^K[(1+a_k^{-1})(E_{1,1+n})^{[k]},(1+a_k)(E_{n+1,1})^{[k]}]\\
&=&\sum_{k=1}^K(2+a_k+a_k^{-1})(E_{1,1}-E_{n+1,n+1})^{[k]}.
\eeqs
Consider its action on $\psi_{\underline{a}}(e_1)=\sum_{k=1}^Kx_1^{[k]}$ and $\psi_{\underline{a}}(f_1)=\sum_{k=1}^Ky_1^{[k]}$,  we have that
\beqs (\ad H_{1,n})(\psi_{\underline{a}}(e_1))&=&\sum_{k=1}^K(2+a_k+a_k^{-1})x_1^{[k]},\\
(-\ad H_{1,n})(\psi_{\underline{a}}(f_1))&=&\sum_{k=1}^K(2+a_k+a_k^{-1})y_1^{[k]},\\
(\ad H_{1,n})^2(\psi_{\underline{a}}(e_1))&=&\sum_{k=1}^K(2+a_k+a_k^{-1})^2x_1^{[k]},\\
(-\ad H_{1,n})^2(\psi_{\underline{a}}(f_1))&=&\sum_{k=1}^K(2+a_k+a_k^{-1})^2y_1^{[k]},\\
\cdots&\cdots&\cdots\\
(\ad H_{1,n})^K(\psi_{\underline{a}}(e_1))&=&\sum_{k=1}^K(2+a_k+a_k^{-1})^Kx_1^{[k]},\\
(-\ad H_{1,n})^K(\psi_{\underline{a}}(f_1))&=&\sum_{k=1}^K(2+a_k+a_k^{-1})^Ky_1^{[k]}.\eeqs

If $2+a_k+a_k^{-1}=2+a_j+a_j^{-1}$, then $(a_ka_j-1)(a_k-a_j)=0$. By our assumption, every $2+a_k+a_k^{-1}$ is non-zero and $2+a_k+a_k^{-1}\not=2+a_j+a_j^{-1}$ for all $k\not=j$. By the invertibility of Vandermonde matrix, we infer that
\beqs x_1^{[1]}, y_1^{[1]}, x_1^{[2]},y_1^{[2]}\cdots,x_1^{[K]},y_1^{[K]}\in Im(\psi_{\underline{a}}).\eeqs

Let $H_{1,2}^{[k]}=[x_1^{[k]},y_1^{[k]}]=(E_{1,1}-E_{2,2})^{[k]}$, and consider its action on $\psi_{\underline{a}}(e_2), \psi_{\underline{a}}(f_2)$, we infer that
\beqs x_2^{[1]}, y_2^{[1]}, x_2^{[2]},y_2^{[2]}\cdots,x_2^{[K]},y_2^{[K]}\in Im(\psi_{\underline{a}}).\eeqs
Repeat this process, similar argument implies that
\beqs x_i^{[k]},y_i^{[k]}\in Im(\psi_{\underline{a}})\eeqs
for every $k$.
By Proposition \ref{P4.2},  we can infer that $\sg_k\in Im(\psi_{\underline{a}})$ for all $k$, and thus the map $\psi_{\underline{a}}$ is surjective.
\end{proof}

\begin{lemma}\label{L5.2}
If $a_1=1$ and $a_k\not=-1$ for every $k\geq 2$, then $Im(\psi_{\underline{a}})$ is a direct sum of one copy of $sp_{2n}$ and $K-1$ copies of $sl_{2n}$.
\end{lemma}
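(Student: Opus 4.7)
The plan is to mimic the Vandermonde argument from the proof of Lemma \ref{L5.1}, verify that the critical scalars do not degenerate under the weaker hypothesis $a_1=1$, and then reduce to Proposition \ref{P4.2} on each component.

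First I would carry out exactly the same chain of commutators $[x_{n-1}^{[k]},x_n^{[k]}],\ldots,[x_1^{[k]},[x_2^{[k]},\cdots,[x_{n-1}^{[k]},x_n^{[k]}]\cdots]]$ (and its $y$-analogue) to produce, inside $Im(\psi_{\underline{a}})$, the elements
\[
\sum_{k=1}^K (1+a_k^{-1})(E_{1,n+1})^{[k]}, \qquad \sum_{k=1}^K (1+a_k)(E_{n+1,1})^{[k]},
\]
whose bracket is $H_{1,n+1}=\sum_{k=1}^K(2+a_k+a_k^{-1})(E_{1,1}-E_{n+1,n+1})^{[k]}$.

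The main point is then to check that the scalars $\lambda_k:=2+a_k+a_k^{-1}$ are all nonzero and pairwise distinct. Since $\lambda_k=0$ iff $a_k=-1$, nonvanishing follows from $a_1=1$ (giving $\lambda_1=4$) together with the hypothesis $a_k\neq -1$ for $k\geq 2$. Pairwise distinctness follows from the identity $\lambda_k=\lambda_j \iff (a_k-a_j)(a_ka_j-1)=0$, which is ruled out by the standing genericity assumption $a_k\neq a_j^{\pm1}$. With these in hand, repeated application of $\ad H_{1,n+1}$ to $\psi_{\underline{a}}(e_1)$ and $\psi_{\underline{a}}(f_1)$, together with the invertibility of the resulting $K\times K$ Vandermonde matrix, isolates every individual $x_1^{[k]}$ and $y_1^{[k]}$ inside $Im(\psi_{\underline{a}})$.

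Next, using the diagonal element $H_{1,2}^{[k]}=[x_1^{[k]},y_1^{[k]}]=(E_{1,1}-E_{2,2})^{[k]}$, which acts as a nonzero scalar on $x_2^{[k]},y_2^{[k]}$ and annihilates $x_2^{[j]},y_2^{[j]}$ for $j\neq k$ (since these sit in distinct summands $\sg_j$ that commute with $\sg_k$), I can separate out each $x_2^{[k]},y_2^{[k]}$, and by iterating this procedure along the chain $i=2,3,\ldots,n$ obtain all the generators $x_i^{[k]},y_i^{[k]}$ individually for every $k$. At this point Proposition \ref{P4.2} applies on each factor: the subalgebra generated by $\{x_i^{[1]},y_i^{[1]}\mid 1\le i\le n\}$ is exactly $sp_{2n}\subset\sg_1$ (since $a_1=1$), while for each $k\ge 2$ the subalgebra generated by $\{x_i^{[k]},y_i^{[k]}\mid 1\le i\le n\}$ is the full $sl_{2n}=\sg_k$ (since $a_k\neq\pm 1$). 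Combined with the obvious inclusion $Im(\psi_{\underline{a}})\subseteq\bigoplus_k Im(\psi_{a_k})$, this gives the equality
\[
Im(\psi_{\underline{a}}) \;=\; sp_{2n}\oplus\underbrace{sl_{2n}\oplus\cdots\oplus sl_{2n}}_{K-1\text{ copies}}.
\]

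The only genuine obstacle is the arithmetic check that no $\lambda_k$ vanishes and that they remain pairwise distinct; once that is settled the proof is structurally identical to Lemma \ref{L5.1}, with Proposition \ref{P4.2} delivering the right image in each summand.
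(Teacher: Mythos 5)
Your proposal is correct and follows essentially the same route as the paper, whose proof of Lemma \ref{L5.2} simply says to repeat the argument of Lemma \ref{L5.1} until all $x_i^{[k]},y_i^{[k]}$ are isolated and then apply Proposition \ref{P4.2} componentwise; your explicit check that the scalars $2+a_k+a_k^{-1}$ remain nonzero (since $a_1=1$ gives $4$, and $a_k\neq-1$ for $k\geq2$) and pairwise distinct (from $a_k\neq a_j^{\pm1}$) is precisely the verification that justifies this repetition under the weakened hypothesis.
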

\begin{proof}
We may repeat the proof for Lemma \ref{L5.1} until we obtain the result
\beqs x_i^{[k]},y_i^{[k]}\in Im(\psi_{\underline{a}}),\; \forall 1\leq k\leq K, 1\leq i\leq n,\eeqs
then $Im(\psi_{\underline{a}})=\bigoplus_{k=1}^K Im(\psi_{a_k})$, by Proposition \ref{P4.2}, $Im(\psi_{a_1})$ is isomorphic to $sp_{2n}$, and $Im(\psi_{a_k})$ is isomorphic to $sl_{2n}$ for $k\geq 2$.
\end{proof}

\begin{lemma}\label{L5.3}
If $a_1=-1, a_2=1$, then $Im(\psi_{\underline{a}})$ is a direct sum of one copy of $so_{2n}$, one copy of $sp_{2n}$ and $K-2$ copies of $sl_{2n}$.
\end{lemma}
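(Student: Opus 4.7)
The plan is to mirror the argument of Lemma \ref{L5.1}, using the Vandermonde trick to isolate the summands $x_i^{[k]},y_i^{[k]}$ lying in the distinct copies $\sg_k$; the new wrinkle is that $a_1=-1$ forces the corresponding Vandermonde eigenvalue to vanish, so the $k=1$ component must be recovered by an extra subtraction step.

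First I would form the same iterated brackets as in Lemma \ref{L5.1} to produce $\sum_k(1+a_k^{-1})E_{1,n+1}^{[k]}$ and $\sum_k(1+a_k)E_{n+1,1}^{[k]}$ in $Im(\psi_{\underline{a}})$, and bracket them to obtain $H_{1,n+1}=\sum_k\beta_k(E_{1,1}-E_{n+1,n+1})^{[k]}$ with $\beta_k:=2+a_k+a_k^{-1}=(1+a_k)(1+a_k^{-1})$. A direct check gives $[H_{1,n+1},x_1^{[k]}]=\beta_k\,x_1^{[k]}$. Under the present hypotheses, $\beta_1=0$, $\beta_2=4$, and for $k\geq 3$ the $\beta_k$ are nonzero and pairwise distinct (from each other and from $\beta_2=4$): the identity $\beta_k-\beta_j=(a_k-a_j)(a_ka_j-1)/(a_ka_j)$ combined with the global condition $a_k\neq a_j^{\pm1}$ handles distinctness, while $a_k\neq -1$ (which follows from $a_k\neq a_1^{\pm1}$) handles non-vanishing. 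Applying $(\ad H_{1,n+1})^m$ to $\psi_{\underline{a}}(e_1)=\sum_k x_1^{[k]}$ for $m=1,\dots,K-1$ therefore kills the $k=1$ summand and yields a square Vandermonde system in $x_1^{[2]},\dots,x_1^{[K]}$ which is invertible, so each of these lies in $Im(\psi_{\underline{a}})$; then $x_1^{[1]}=\psi_{\underline{a}}(e_1)-\sum_{k\geq 2}x_1^{[k]}$ is also in the image, and symmetrically for each $y_1^{[k]}$.

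Once every $x_1^{[k]},y_1^{[k]}$ has been isolated, the remaining generators $x_i^{[k]},y_i^{[k]}$ for $2\leq i\leq n$ are obtained exactly as in Lemma \ref{L5.1}, by bracketing $\psi_{\underline{a}}(e_i),\psi_{\underline{a}}(f_i)$ against the diagonal elements $[x_1^{[k]},y_1^{[k]}]=(E_{1,1}-E_{2,2})^{[k]}$ and iterating. Consequently $Im(\psi_{a_k})\subseteq Im(\psi_{\underline{a}})$ for every $k$, and since the reverse containment $Im(\psi_{\underline{a}})\subseteq \bigoplus_k Im(\psi_{a_k})$ is automatic (project onto each $\sg_k$), we obtain $Im(\psi_{\underline{a}})=\bigoplus_k Im(\psi_{a_k})$. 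Proposition \ref{P4.2} then identifies $Im(\psi_{a_1})\cong so_{2n}$ (since $a_1=-1$), $Im(\psi_{a_2})\cong sp_{2n}$ (since $a_2=1$), and $Im(\psi_{a_k})\cong sl_{2n}$ for $k\geq 3$, yielding the claimed decomposition. The main obstacle is precisely the degeneracy $\beta_1=0$, which makes the full Vandermonde system of Lemma \ref{L5.1} singular; the subtraction step above neutralizes it, but its validity depends crucially on the reduced system on $(\beta_2,\dots,\beta_K)$ remaining nonsingular, and the separation argument verifying this is the one piece of the proof that does not just copy Lemma \ref{L5.1} verbatim.
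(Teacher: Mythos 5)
Your proposal is correct and takes essentially the same route as the paper: both exploit that $a_1=-1$ kills the coefficient $2+a_1+a_1^{-1}$ in $H_{1,n+1}$, run the Vandermonde argument of Lemma \ref{L5.1} on the remaining components $k\geq 2$, recover the $k=1$ component by subtracting $\sum_{k\geq2}x_i^{[k]}$ (resp.\ $\sum_{k\geq2}y_i^{[k]}$) from $\psi_{\underline{a}}(e_i)$ (resp.\ $\psi_{\underline{a}}(f_i)$), and then identify the summands via Proposition \ref{P4.2}. The only differences are cosmetic: you subtract already at $i=1$ and then propagate to $2\leq i\leq n$, while the paper extracts all $x_i^{[k]},y_i^{[k]}$ with $k\geq2$ first and subtracts at the end, and your explicit check that the $\beta_k$ for $k\geq2$ are nonzero and pairwise distinct spells out a step the paper leaves implicit.
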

\begin{proof}
We may repeat the proof for Lemma \ref{L5.1} until we obtain $H_{1,n+1}$. Notice that, in this case
\beqs H_{1,n+1}&:=&\sum_{k=2}^K(2+a_k+a_k^{-1})(E_{1,1}-E_{n+1,n+1})^{[k]}.\eeqs
Then go on with the process in Lemma \ref{L5.2}, we get that
\beqs x_i^{[k]},y_i^{[k]}\in Im(\psi_{\underline{a}}), \; \forall 2\leq k\leq K, 1\leq i\leq n,\eeqs
then
\beqs x_i^{[1]}=\psi_{\underline{a}}(e_i)-\big(\sum_{k=2}^Kx_i^{[k]}\big)\in Im(\psi_{\underline{a}}),&&y_i^{[1]}=\psi_{\underline{a}}(f_i)-\big(\sum_{k=2}^Ky_i^{[k]}\big)\in Im(\psi_{\underline{a}}),\eeqs
it also holds that $Im(\psi_{\underline{a}})=\bigoplus_{k=1}^K Im(\psi_{a_k})$. By Proposition \ref{P4.2}, $Im(\psi_{a_1})$ is isomorphic to $so_{2n}$, $Im(\psi_{a_2})$ is isomorphic to $sp_{2n}$, and $Im(\psi_{a_k})$ is isomorphic to $sl_{2n}$ for $k\geq 3$.
\end{proof}

\begin{lemma}\label{L5.4}
If $a_1=-1$ and $a_k\not=1$ for $k\geq 2$, then $Im(\psi_{\underline{a}})$ is a direct sum of one copy of $so_{2n}$ and $K-1$ copies of $sl_{2n}$.
\end{lemma}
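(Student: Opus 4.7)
The plan is to mimic the template of Lemmas \ref{L5.1}--\ref{L5.3}, but track carefully which coefficients vanish when $a_1=-1$. First I would run the same computation as in Lemma \ref{L5.1} to construct the element
$$H_{1,n+1}=\Big[\sum_{k=1}^K(1+a_k^{-1})(E_{1,n+1})^{[k]},\;\sum_{k=1}^K(1+a_k)(E_{n+1,1})^{[k]}\Big]=\sum_{k=1}^K(2+a_k+a_k^{-1})(E_{1,1}-E_{n+1,n+1})^{[k]}.$$
Since $a_1=-1$, the factor $2+a_1+a_1^{-1}=0$ kills the $k=1$ component, so in fact
$$H_{1,n+1}=\sum_{k=2}^K(2+a_k+a_k^{-1})(E_{1,1}-E_{n+1,n+1})^{[k]},$$
exactly paralleling the opening of Lemma \ref{L5.3}.

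Next I would iterate $\ad H_{1,n+1}$ on $\psi_{\underline{a}}(e_1)$ and $\psi_{\underline{a}}(f_1)$ and invert a Vandermonde system to isolate the individual summands $x_1^{[k]},y_1^{[k]}$ for $k\geq 2$. The hypotheses make this work: by assumption $a_k\neq 1$ for $k\geq 2$, and the standing requirement $a_k\neq a_j^{\pm 1}$ for $k\neq j$ forces $a_k\neq a_1^{\pm 1}=-1$; together these guarantee that each $2+a_k+a_k^{-1}=(1+a_k)(1+a_k^{-1})$ is nonzero and that these scalars are pairwise distinct for $k=2,\dots,K$, which is exactly what the Vandermonde argument needs. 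Then, as in Lemma \ref{L5.1}, using the diagonal brackets $H_{1,2}^{[k]}=[x_1^{[k]},y_1^{[k]}]$ and the further iterative extractions gives $x_i^{[k]},y_i^{[k]}\in Im(\psi_{\underline{a}})$ for every $k\geq 2$ and every $1\leq i\leq n$.

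To recover the missing first slot, I would subtract:
\beqs
x_i^{[1]}=\psi_{\underline{a}}(e_i)-\sum_{k=2}^K x_i^{[k]}\in Im(\psi_{\underline{a}}),\qquad y_i^{[1]}=\psi_{\underline{a}}(f_i)-\sum_{k=2}^K y_i^{[k]}\in Im(\psi_{\underline{a}}),
\eeqs
so that $Im(\psi_{\underline{a}})=\bigoplus_{k=1}^K Im(\psi_{a_k})$. Finally I would invoke Proposition \ref{P4.2}: because $a_1=-1$, $Im(\psi_{a_1})\cong so_{2n}$, and because $a_k\neq\pm 1$ for $k\geq 2$, each $Im(\psi_{a_k})\cong sl_{2n}$, yielding the claimed decomposition.

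The only genuinely subtle point is the Vandermonde step, since the $k=1$ summand is absent. The main obstacle is therefore verifying nonvanishing and distinctness of the remaining coefficients $2+a_k+a_k^{-1}$, which reduces to observing that the standing condition $a_k\neq a_j^{\pm 1}$ combined with $a_1=-1$ automatically rules out $a_k=-1$ for $k\geq 2$. Every other step is a routine transcription of the arguments already carried out in Lemmas \ref{L5.1}--\ref{L5.3}, with the identification of $Im(\psi_{a_1})$ as $so_{2n}$ supplied directly by Proposition \ref{P4.2}.
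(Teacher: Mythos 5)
Your proposal is correct and is exactly the paper's intended argument: the paper's proof of Lemma \ref{L5.4} consists of the single remark that it is ``very similar to that for Lemma \ref{L5.3},'' and you have carried out precisely that adaptation --- the $k=1$ coefficient $2+a_1+a_1^{-1}$ vanishes, the Vandermonde extraction (justified since $a_k\neq 1$ by hypothesis and $a_k\neq -1$ by the standing condition $a_k\neq a_j^{\pm1}$ with $a_1=-1$) isolates the slots $k\geq 2$, subtraction recovers the first slot, and Proposition \ref{P4.2} identifies the summands. No gaps; your observation about why $a_k=-1$ is automatically excluded for $k\geq 2$ is the one detail the paper leaves implicit, and you verified it correctly.
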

\begin{proof}
The proof is very similar to that for Lemma \ref{L5.3}.
\end{proof}

\section{Proof for   Theorem 3.2}

If $G(t)$ is a polynomial with non-zero constant term, we may define  the quotient algebra
\beqs \hat{\sg}/\langle\sg\otimes G(t)\mc[t,t^{-1}]\rangle,\eeqs
and denoted by $\hat{\sg}/G(t)$.  Note that, we may regard the central element $c$ as $0$ in the quotient algebra $\hat{\sg}/G(t)$ for all non-trivial polynomial $G(t)$.

\begin{lemma}
If $G(t)|F(t)$, then $\hat{\sg}/G(t)$ is a quotient of $\hat{\sg}/F(t)$. Particularly,
\beqs \hat{\sg}/G(t)\cong \frac{\hat{\sg}/F(t)}{\sg\otimes G(t)\mc[t,t^{-1}]/F(t)}.\eeqs
\end{lemma}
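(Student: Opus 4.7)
The proof is essentially an application of the third isomorphism theorem for Lie algebras, so my plan is to first verify the necessary containment of ideals and then quote the isomorphism theorem. Write $I_G := \langle \sg\otimes G(t)\mc[t,t^{-1}]\rangle$ and $I_F := \langle \sg\otimes F(t)\mc[t,t^{-1}]\rangle$ for the ideals in $\hat{\sg}$ used to form the quotients $\hat{\sg}/G(t)$ and $\hat{\sg}/F(t)$.

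First I would observe that if $F(t)=G(t)H(t)$ with $H(t)\in\mc[t,t^{-1}]$, then every element of the form $x\otimes F(t)t^m = x\otimes G(t)\cdot H(t)t^m$ already lies in $\sg\otimes G(t)\mc[t,t^{-1}]$; consequently $\sg\otimes F(t)\mc[t,t^{-1}]\subseteq\sg\otimes G(t)\mc[t,t^{-1}]$, and therefore $I_F\subseteq I_G$ as ideals of $\hat{\sg}$. This gives at once a well-defined surjective Lie-algebra homomorphism
\[
\hat{\sg}/F(t) \;=\; \hat{\sg}/I_F \;\twoheadrightarrow\; \hat{\sg}/I_G \;=\; \hat{\sg}/G(t),
\]
proving the first assertion. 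The kernel of this map is the image $I_G/I_F$, and by the third isomorphism theorem
\[
\hat{\sg}/G(t)\;\cong\;\frac{\hat{\sg}/I_F}{I_G/I_F}\;=\;\frac{\hat{\sg}/F(t)}{I_G/I_F}.
\]

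The remaining step is to identify $I_G/I_F$ with the expression $\sg\otimes G(t)\mc[t,t^{-1}]/F(t)$ displayed in the lemma, i.e., with the image in $\hat{\sg}/F(t)$ of the subspace $\sg\otimes G(t)\mc[t,t^{-1}]$. The only subtlety here is the central element $c$: as a subspace of $\hat{\sg}$ the ideal $I_G$ properly contains $\sg\otimes G(t)\mc[t,t^{-1}]$ since brackets of the form $[x\otimes G(t)t^m,y\otimes t^{-m}]$ pick up a scalar multiple of $c$. Using the nontriviality of the invariant form on $\sg$ and the fact that $G(t)\ne 0$, one checks that such brackets produce $c$, so that $I_G=\sg\otimes G(t)\mc[t,t^{-1}]\oplus\mc c$; in particular $c\in I_F$ as well (applying the same observation to $F$). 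Therefore in the quotient $\hat{\sg}/F(t)$ the class of $c$ vanishes, and the image of $I_G$ coincides with the image of the subspace $\sg\otimes G(t)\mc[t,t^{-1}]$, which is exactly the notation $\sg\otimes G(t)\mc[t,t^{-1}]/F(t)$ used in the statement. Once this identification is made, the isomorphism displayed in the lemma is established.

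The main (mild) obstacle is thus the bookkeeping of the central element: I must make sure that $c\in I_F\subseteq I_G$ before I can identify $I_G/I_F$ with the ``naked'' quotient $\sg\otimes G(t)\mc[t,t^{-1}]/F(t)\mc[t,t^{-1}]$ of loop-algebra subspaces, rather than with a genuine extension involving $c$. This is handled by the explicit bracket computation in the central extension, and the rest of the argument is a direct citation of the third isomorphism theorem.
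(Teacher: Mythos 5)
Your proof is correct and follows essentially the same route as the paper, which simply invokes the fundamental homomorphism (isomorphism) theorem after the containment of ideals. Your extra bookkeeping showing $c\in I_F\subseteq I_G$ via the bracket $[x\otimes G(t)t^m,\,y\otimes t^k]$ is sound and merely makes explicit what the paper disposes of in the remark preceding the lemma, namely that $c$ may be regarded as $0$ in $\hat{\sg}/G(t)$ for any nontrivial $G(t)$.
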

\begin{proof}
This result follows from the homomorphism fundamental theorem.
\end{proof}

\begin{lemma}
If $(x-a)^r|G(t)$ for some $r>1$, then $\hat{\sg}/G(t)$ has a nilpotent ideal
$$I=\sg\otimes\frac{G(t)}{(t-a)}\mc[t,t^{-1}].$$
\end{lemma}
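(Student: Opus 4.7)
The plan is to verify in turn the two claims: that $I$ is a Lie ideal of $\hat{\sg}/G(t)$, and that it is nilpotent. Both will follow from how the factor $G(t)/(t-a)$ behaves under the bracket, once we recall (per the remark preceding this lemma) that the central element $c$ acts as zero in the quotient $\hat{\sg}/G(t)$.

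First I would check the ideal property. For a typical generator $x \ot t^m \in \hat{\sg}$ and a typical element $y \ot \frac{G(t)}{t-a} q(t) \in I$, the bracket, after discarding the central term because $c = 0$ in the quotient, collapses to $[x,y] \ot \frac{G(t)}{t-a}\, t^m q(t)$. This still carries the factor $G(t)/(t-a)$, hence lies in $I$. Extending linearly, $[\hat{\sg}, I] \subseteq I$, so $I$ is an ideal.

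For nilpotency, my plan is to prove the stronger statement that $I$ is actually abelian in $\hat{\sg}/G(t)$. Writing $G(t) = (t-a)^r H(t)$ with $r \geq 2$, the bracket of any two elements of $I$ produces a factor of $\bigl(G(t)/(t-a)\bigr)^2 = (t-a)^{2(r-1)} H(t)^2$. Since $r \geq 2$ forces $2(r-1) \geq r$, this expression is divisible by $(t-a)^r H(t) = G(t)$; therefore $[I,I] \subseteq \sg \ot G(t)\mc[t, t^{-1}]$, which is zero in the quotient. Hence $[I,I] = 0$, and in particular $I$ is nilpotent.

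I do not anticipate a serious obstacle here; the argument rests on the elementary inequality $2(r-1) \geq r$ for $r \geq 2$, which upgrades $I$ all the way from nilpotent to abelian. The only point that requires any care is bookkeeping the vanishing of the central contribution, but the preceding remark that $c = 0$ in $\hat{\sg}/G(t)$ for any nontrivial $G$ disposes of this immediately.
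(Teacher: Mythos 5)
Your proposal is correct and follows essentially the same route as the paper: verify the ideal property directly, then show $[I,I]=0$ via the divisibility $G(t)\mid\bigl(G(t)/(t-a)\bigr)^2$, which is exactly the paper's observation that $G(t)\mid G(t)^2/(t-a)^2$ since $(t-a)^2\mid G(t)$. Your inequality $2(r-1)\geq r$ and your handling of the central term (which the paper dispatches with the same preceding remark) are just explicit restatements of the paper's argument, so there is nothing further to reconcile.
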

\begin{proof}
Obviously, for any $x\in\hat{\sg}$ and $y\in I$, we have $[x,y]\in I$. Suppose that $r^*\geq r$ is the multiple of prime factor $t-a$, then for any $u\otimes f(t), v\otimes g(t)\in I$, we have $(t-a)^2|G(t)$, and
$$G(t)\big|\frac{G(t)^2}{(t-a)^2},\quad \frac{G(t)^2}{(t-a)^2}\big|f(t)g(t),$$and thus $[I,I]=0$.
\end{proof}

If $G(t)$ has degree $K$, then it is well-known  that $\hat{\sg}/G(t)$ is the direct sum of $K$ copies of $\sg$ if $G(t)$ has $K$ many different factors $t-a_i$, where $a_i\not=0 (1\leq i\leq K)$.

Let $G(t)=\prod_{i=1}^k(t-a_i)$, then there exists $c_i\not=0$, such that
\beqs \sum_{i=1}^k c_i\frac{G(t)}{t-a_i}=1,\eeqs
then
\beqs \hat{\sg}/G(t)=\bigoplus_{i=1}^k\hat{\sg}\otimes \frac{G(t)}{t-a_i}/G(t)\cong \bigoplus_{i=1}^k\frac{\hat{\sg}}{t-a_i}.\eeqs

Let $\phi$ be as defined in Section 4. Then
\beqs [\hat{e}_1,\cdots[\hat{e}_{n-1},\hat{e}_{n}]\cdots ]&=&E_{\alpha_1+\cdots+\alpha_n}\otimes(1+t^{-1}),\\
{[\cdots[\hat{f}_{n-1},\hat{f}_{n-2}],\cdots\hat{f}_1]}&=&E_{-\alpha_1-\cdots-\alpha_{n-1}}\otimes1-E_{\alpha_{n+1}+\cdots+\alpha_{2n-1}}\otimes1,\\
{[[\cdots[\hat{f}_{n-1},\hat{f}_{n-2}],\cdots\hat{f}_1],E_{\alpha_1+\cdots+\alpha_n}\otimes(1+t^{-1})}]&=&(E_{\alpha_n}-E_{\alpha_{1}+\cdots+\alpha_{2n-1}})\otimes(1+t^{-1}),\\
{[(E_{\alpha_n}-E_{\alpha_{1}+\cdots+\alpha_{2n-1}})\otimes(1+t^{-1}), \hat{f}_n]}&=&H_n\otimes(1+t^{-1})+(H_1+\cdots H_{2n-1})\otimes (1+t)-c,
\eeqs
this implies that
\beqs \Xi:=H_n\otimes t^{-1}+(H_1+\cdots+H_{2n-1})\otimes t\in \phi(\gim(M_n)),\eeqs
then
\beqs [(\frac12\ad \Xi)^m(\hat{e}_n),\hat{f}_n]&=&H_n\otimes t^{-m}+(H_1+\cdots+H_{2n-1})\otimes t^m,\\
{[(\ad \Xi)^m(\hat{e}_1),\hat{f}_1]}&=&H_1\otimes t^{m}-H_{n+1}\otimes t^{-m},\eeqs
for all $m>0$. Similarly, we infer that
\beqs H_i\otimes t^{-m}-H_{n+i}\otimes t^m\in \phi(\gim(M_n)),\eeqs
for all $1\leq i\leq n$ and $m\in\mz\setminus\{0\}$, where $H_{2n}=-H_1-\cdots-H_{2n-1}$.

Moreover,
\beqs (H_i\otimes t^{-m}-H_{n+i}\otimes t^m)\pm(H_i\otimes t^{m}-H_{n+i}\otimes t^{-m})&=&(H_i\mp H_{n+i})\otimes (t^m\pm t^{-m}).\eeqs

Suppose that $I$ is an ideal of $\gim(M_n)$ such that the quotient algebra $\gim(M_n)/I$ is finite-dimensional and semi-simple. Since $\phi$ is an isomorphism, we have that $\phi(I)$ is an ideal of $\hat{\sg}_{fp}=\phi(\gim(M_n))\cong\gim(M_n)$ and $\hat{\sg}_{fp}/\phi(I)\cong\gim(M_n)/I$. Note that
every polynomial in variable $t+t^{-1}$ is a linear combination of $t^{m}+t^{-m} (m\geq 0)$. Then there must hold that
\beqs (H_i-H_{n+i})\otimes \theta(t)\in\phi(I),&&1\leq i\leq n,\eeqs
where $\theta(t)=\eta(t+t^{-1})$ for some polynomial $\eta$. Otherwise, the elements
\beqs \Big\{H\otimes(t^j+t^{-j})| j\in\mz_{\geq0}\Big\}\eeqs
are linearly independent in $\hat{\sg}_{fp}/\phi(I)$ for some $H\in span\{H_i-H_{n+i}|1\leq i\leq n\}$,  which contradicts to the assumption of the finite-dimension.

Actually, $\hat{h}_i=H_i-H_{n+i}$ for all $1\leq i\leq n$. Let $\sh_{fp}=span\{\hat{h}_i|1\leq i\leq n\}$. Then  $\hat{\sg}_{fp}$ has a root decomposition
\beqs \hat{\sg}_{fp}=(\hat{\sg}_{fp})_0+\sum_{\alpha}(\hat{\sg}_{fp})_\alpha,\eeqs
where $(\hat{\sg}_{fp})_0=\{x\in \hat{\sg}_{fp}| [\sh_{fp},x]=0\}$ and $(\hat{\sg}_{fp})_\alpha=\{x\in \hat{\sg}_{fp}|[h,x]=\alpha(h)x,\forall h\in\sh_{fp}\}$.

By Berman's result (\cite{Br}, Proposition 1.12), it holds that $\hat{\sg}=\hat{\sg}_{fp}\oplus M$, where $M$ is a $\hat{\sg}_{fp}$-module and $[M,M]\subseteq \hat{\sg}_{fp}$.
If $x=\sum_{j=1}^d x_j\otimes p_j(t)\theta(t)\in (\hat{\sg}_{fp})_\alpha, \alpha\not=0$,
then there exists $H\in\sh_{fp}$ such that $\alpha(H)\not=0$ and
\beqs x=\frac{1}{\alpha(H)}\sum_{j=1}^d [H\otimes\theta(t), x_j\otimes p_j(t)]\in \phi(I).\eeqs
Otherwise, we infer that $\sum_{j=1}^d x_j\otimes p_j(t)\not\in\hat{\sg}_{fp}$ and thus $x\not\in \hat{\sg}_{fp}$, which is a contradiction.

Moreover, $(\hat{\sg}_{fp})_0\subseteq \sh\otimes\mc[t,t^{-1}]\oplus\mc c$ and
\beqs (\sh\otimes\mc[t+t^{-1}]\oplus\mc c)\bigcap\hat{\sg}_{fp}&=&\sum_{m=0}^\infty\sum_{i=1}^n\mc\big((H_i-H_{n+i})\otimes (t^m+t^{-m})-\delta_{i,n}c\big),\eeqs
then we have the following result:
\begin{lemma}
We let $J$ be the ideal of $\hat{\sg}$ generated by  $\{H_i\otimes\theta(t) |1\leq i\leq 2n-1\}$, then
\beqs\phi(\gim(M_n))\bigcap J\subseteq \phi(I),\eeqs and naturally there exists an epimorphism
\beqs \frac{\phi(\gim(M_n))}{\phi(\gim(M_n))\bigcap J}\rightarrow \gim(M_n)/I.\eeqs
\end{lemma}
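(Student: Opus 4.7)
The plan is to dissect $\phi(\gim(M_n))\cap J$ using the root decomposition of $\hat{\sg}_{fp}$ with respect to $\sh_{fp}$, handle the non-zero weight components with the machinery already set up in the discussion preceding the lemma, and treat the zero-weight component by hand using the explicit description of $(\hat{\sg}_{fp})_0$ also recorded just before the lemma. The surjection in the second clause will then follow from the first isomorphism theorem.

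First I would note that $\phi(\gim(M_n))\cap J$ is $\ad\sh_{fp}$-stable, since $\sh_{fp}\subseteq\sh\otimes 1$ acts semisimply on $\hat{\sg}$ and $J$ is an ideal. A short computation using $[H_i\otimes\theta(t),E_\alpha\otimes t^m]=\alpha(H_i)E_\alpha\otimes t^m\theta(t)$ followed by brackets of opposite root vectors shows $J\supseteq\sg\otimes\theta(t)\mc[t,t^{-1}]$. Given $x=x_0+\sum_{\alpha\neq 0}x_\alpha\in\phi(\gim(M_n))\cap J$, each weight component $x_\alpha$ again lies in $\phi(\gim(M_n))\cap J$. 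When $\alpha\neq 0$, the element $x_\alpha$ has the form $\sum_j y_j\otimes p_j(t)\theta(t)$ with no $c$-contribution (since $c$ is of weight $0$), so the argument recalled immediately before the lemma---applying $\ad((H_i-H_{n+i})\otimes\theta(t))$ for an index $i$ with $\alpha(H_i-H_{n+i})\neq 0$ and then rescaling---places $x_\alpha\in\phi(I)$.

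The main obstacle is the zero-weight component $x_0\in(\hat{\sg}_{fp})_0\cap J$. Using $(\hat{\sg}_{fp})_0\subseteq\sh\otimes\mc[t,t^{-1}]\oplus\mc c$ together with the explicit basis of $(\sh\otimes\mc[t+t^{-1}]\oplus\mc c)\cap\hat{\sg}_{fp}$ given just before the lemma, I would expand $x_0$ as a linear combination of elements of the form $(H_i-H_{n+i})\otimes(t^m+t^{-m})\theta(t)-\delta_{i,n}\gamma_{i,m} c$, where the overall divisibility by $\theta(t)$ is forced by membership in $J$. Each such basis element must then be produced inside $\phi(I)$: starting from $(H_i-H_{n+i})\otimes\theta(t)\in\phi(I)$ and iteratively bracketing with carefully chosen $\hat e_j,\hat f_j\in\phi(\gim(M_n))$ builds up the factor $t^m+t^{-m}$, the required multiple of $c$ appearing automatically from the affine cocycle. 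Checking that these iterated brackets really cover every basis element and that the $c$-coefficients match up is the main technical step.

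With the containment established, the second statement is immediate: the composition $\phi(\gim(M_n))\to\phi(\gim(M_n))/\phi(I)\cong\gim(M_n)/I$ has kernel containing $\phi(\gim(M_n))\cap J$, so by the first isomorphism theorem it factors through $\phi(\gim(M_n))/(\phi(\gim(M_n))\cap J)$, yielding the asserted epimorphism.
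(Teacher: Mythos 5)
Your skeleton coincides with the route the paper takes (the paper gives no separate proof of this lemma; it is presented as the distillation of the discussion preceding it): decompose $\phi(\gim(M_n))\cap J$ under $\ad\,\sh_{fp}$, dispatch the components of weight $\alpha\not=0$ by bracketing with $(H_i-H_{n+i})\otimes\theta(t)\in\phi(I)$, and get the epimorphism from the first isomorphism theorem. The $\alpha\not=0$ step and the final step match the paper, with one caution: $\ad(H\otimes\theta(t))$ must be applied to the $\theta$-stripped element $\sum_j x_j\otimes p_j(t)$, not to $x_\alpha=\sum_j x_j\otimes p_j(t)\theta(t)$ itself (which would only return $\alpha(H)\,x_\alpha\cdot\theta$-type elements), and one must know that the stripped element still lies in $\hat{\sg}_{fp}$; the paper secures this via Berman's decomposition $\hat{\sg}=\hat{\sg}_{fp}\oplus M$ together with the symmetry $\theta(t^{-1})=\theta(t)$, a point your write-up passes over.

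The genuine gap is in your zero-weight case, in two respects. First, the expansion of $x_0$ in the basis $\big\{(H_i-H_{n+i})\otimes(t^m+t^{-m})\theta(t)-\delta_{i,n}\gamma_{i,m}c\big\}$ is unjustified: the displayed formula before the lemma describes only $(\sh\otimes\mc[t+t^{-1}]\oplus\mc c)\cap\hat{\sg}_{fp}$, i.e.\ the part with coefficients polynomial in $t+t^{-1}$, whereas $(\hat{\sg}_{fp})_0$ is strictly larger --- it contains every $H_i\otimes t^{-m}-H_{n+i}\otimes t^m$ ($m\not=0$), elements the paper itself constructs inside $\phi(\gim(M_n))$, whose antisymmetric part $\frac12(H_i+H_{n+i})\otimes(t^m-t^{-m})$ lies outside $\sh\otimes\mc[t+t^{-1}]\oplus\mc c$. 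You have conflated $\mc[t,t^{-1}]$ with $\mc[t+t^{-1}]$, so your case analysis simply misses part of $(\hat{\sg}_{fp})_0\cap J$. Second, even for the symmetric part you do treat, the decisive verification --- that iterated brackets with $\hat e_j,\hat f_j$ really produce every required element with the matching central coefficient --- is precisely what you label ``the main technical step'' and leave unexecuted; announcing it is not proving it. What is actually needed, and what the paper's earlier computations supply the tools for (e.g.\ $[(\frac12\ad\Xi)^m(\hat e_n),\hat f_n]=H_n\otimes t^{-m}+(H_1+\cdots+H_{2n-1})\otimes t^m$), is an argument that, since $\phi(I)$ is an ideal of $\hat{\sg}_{fp}$ containing the elements $(H_i-H_{n+i})\otimes\theta(t)$ (with their central corrections), repeated bracketing inside $\hat{\sg}_{fp}$ generates all of $(\hat{\sg}_{fp})_0\cap\big(\sh\otimes\theta(t)\mc[t,t^{-1}]\oplus\mc c\big)$, antisymmetric coefficients included. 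Without this the containment $\phi(\gim(M_n))\cap J\subseteq\phi(I)$ is not established; granting it, your derivation of the epimorphism is correct and is the same as the paper's.
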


For convenience, we let $\theta^*(t)=t^r\eta(t+t^{-1})$, where $r=\deg\eta$, then $\theta^*$  is a polynomial with $\theta^*(0)\not=0$ and it holds that
\beqs\hat{\sg}/J=\hat{\sg}/\theta^*(t).\eeqs
Moreover, $\theta^*(t)$ and $\eta(t+t^{-1})$ have same roots.

Suppose that $\hat{\sg}/\theta^*(t)$ has a Levi decomposition
\beqs \hat{\sg}/\theta^*(t)=S+U,\eeqs
where $S$ is semi-simple and $U$ is the solvable radical. Let ${\bf p}: S+U\rightarrow (S+U)/U$ be the canonical map.
If ${\bf i}:\gim(M_n)/I\rightarrow\hat{\sg}/\theta^*(t)\subseteq S$ is the injective map, then ${\bf p}\circ{\bf i}$ is injective. Hence we may assume that $$\theta^*(t)=\prod_{i=1}^K(t-a_i),$$
where $a_i\not=a_j$ if $i\not=j$.

Out of questions, $\gim(M_n)/I$ is a subalgebra of $\hat{\sg}/\theta^*(t)$ through isomorphism $\phi$.

We may assume that $c_i, d_i\in\mc^\times$ such that
\beqs \sum_{i=1}^Kc_i\frac{\theta^*(t)}{t-a_i}=1, \quad \frac{\theta^*(t)}{t-a_i}\equiv d_i (\hbox{\rm mod\;} t-a_i),\eeqs
we also assume that $\hat{\sg}/\theta^*(t)=\bigoplus_{i=1}^K \sg_i$, and $\psi^\clubsuit$ is the canonical map from $\hat{\sg}$ to $\hat{\sg}/\theta^*(t)$.

Let  $$\psi_{a_i}:\hat{\sg}\rightarrow\hat{\sg}_i=\frac{\sg\otimes \theta^*(t)/(t-a_i)\mc[t,t^{-1}]}{(t-a_i)}\cong\hat{\sg}/(t-a_i)$$ be the evaluation map, where $\psi_{a_i}(x\otimes t^m\frac{\theta^*(t)}{(t-a_i)})=d_ia_i^mx^{[i]}$.

\begin{lemma}\label{L6.4}
For any $x\otimes t^m$, we have
$$\psi^\clubsuit(x\otimes t^m)=\sum_{i=1}^Kc_id_i\psi_{a_i}(x\otimes t^m)=\sum_{i=1}^Kc_id_ia_i^mx^{[i]}.$$
Particularly, $c_id_i=1$ for any $1\leq i\leq K$, then $\psi^\clubsuit=\psi_{\underline{a}}$, where $\underline{a}=(a_1,\cdots,a_K)$.
\end{lemma}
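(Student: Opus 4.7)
The plan is to treat this lemma as a bookkeeping identity obtained from partial fractions. First I would use the identity $\sum_i c_i\frac{\theta^*(t)}{t-a_i}=1$ to decompose $x\otimes t^m$ modulo $\theta^*(t)$ across the direct summands $\sg_i$; then apply $\psi_{a_i}$ to each piece using its defining formula; finally verify the normalization $c_i d_i=1$ by a Lagrange-interpolation argument.

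For the first step, multiplying the partial-fraction identity by $x\otimes t^m$ and reducing modulo $\theta^*(t)\mc[t,t^{-1}]$ yields
$$\psi^\clubsuit(x\otimes t^m)=\sum_{i=1}^K c_i\Bigl(x\otimes t^m\tfrac{\theta^*(t)}{t-a_i}\Bigr).$$
Each summand lies in the ideal $\sg\otimes\frac{\theta^*(t)}{t-a_i}\mc[t,t^{-1}]/\theta^*(t)$, i.e.\ in the summand identified with $\sg_i$ in the decomposition $\hat{\sg}/\theta^*(t)=\bigoplus_{i=1}^K\sg_i$, so the displayed expansion is the canonical one.

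Next, the defining rule $\psi_{a_i}\bigl(x\otimes t^m\frac{\theta^*(t)}{t-a_i}\bigr)=d_i a_i^m x^{[i]}$ applied term by term gives
$$\psi^\clubsuit(x\otimes t^m)=\sum_{i=1}^K c_i d_i a_i^m x^{[i]},$$
which is the first asserted equality. For the normalization, evaluating the identity $\sum_j c_j\frac{\theta^*(t)}{t-a_j}=1$ at $t=a_i$ annihilates every term with $j\ne i$ (because $(t-a_i)\mid\frac{\theta^*(t)}{t-a_j}$ when $j\ne i$) and leaves $c_i\prod_{j\ne i}(a_i-a_j)=1$; on the other hand $d_i=\frac{\theta^*(t)}{t-a_i}\big|_{t=a_i}=\prod_{j\ne i}(a_i-a_j)$ by direct substitution, so $c_i d_i=1$ for every $i$. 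Comparing with $\psi_{\underline{a}}(x\otimes t^m)=\sum_{i=1}^K a_i^m x^{[i]}$ gives $\psi^\clubsuit=\psi_{\underline{a}}$.

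There is no substantial obstacle here: the lemma is precisely a dictionary identifying the abstract quotient map $\psi^\clubsuit:\hat{\sg}\to\hat{\sg}/\theta^*(t)$ with the concrete direct sum of evaluation maps $\psi_{\underline{a}}$, and its entire content reduces to the classical Lagrange/partial-fraction identity between the coefficients $c_i$ and $d_i$.
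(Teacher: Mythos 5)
Your proposal is correct and follows essentially the same route as the paper's proof: multiply the partial-fraction identity $\sum_{i=1}^K c_i\frac{\theta^*(t)}{t-a_i}=1$ by $x\otimes t^m$, read off each summand as lying in $\sg_i$ via the defining rule $\psi_{a_i}\bigl(x\otimes t^m\frac{\theta^*(t)}{t-a_i}\bigr)=d_ia_i^mx^{[i]}$, and obtain $c_id_i=1$ by evaluating the identity at $t=a_i$. Your writeup merely makes explicit two details the paper leaves tacit (that the $j\neq i$ terms vanish at $t=a_i$ since $(t-a_i)\mid\frac{\theta^*(t)}{t-a_j}$, and the product formula $d_i=\prod_{j\neq i}(a_i-a_j)$), so there is no substantive difference.
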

\begin{proof}
Since $\sum_{i=1}^Kc_i\frac{\theta^*(t)}{t-a_i}=1$, evaluate $t=a_i$ in both sides, we have $c_id_i=1$.
For any $x\otimes t^m$, we have
\beqs \psi^\clubsuit(x\otimes t^m)&=&\sum_{i=1}^Kc_ix\otimes t^m\frac{\theta^*(t)}{t-a_i}\\
&=&\sum_{i=1}^Kc_id_i a_i^mx^{[i]}=\sum_{i=1}^K a_i^mx^{[i]}.\eeqs
\end{proof}

\begin{lemma}\label{L6.5}
Let $a\not=0, \pm1$, then associated to  the map
$$\psi_{a}\oplus\psi_{a^{-1}}:\gim(M_n)/I\rightarrow \hat{\sg}/(t-a)\oplus \hat{\sg}/(t-a^{-1}),$$ we have that
$$Im(\psi_{a}\oplus\psi_{a^{-1}})\cong Im(\psi_a).$$
\end{lemma}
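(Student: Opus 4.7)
The plan is to exhibit an explicit Lie algebra automorphism $\tau$ of $sl_{2n}$ satisfying $\psi_{a^{-1}} = \tau \circ \psi_a$ on the generators $e_1,\ldots,e_n,f_1,\ldots,f_n$ of $\gim(M_n)$. Since these generate the whole algebra, the identity extends by homomorphism to all of $\gim(M_n)/I$. Once this is in place, the image of $\psi_a \oplus \psi_{a^{-1}}$ coincides with the graph $\{(y,\tau(y)) : y \in Im(\psi_a)\}$, and projection onto the first factor exhibits an isomorphism with $Im(\psi_a)$; by Proposition \ref{P4.2} (using $a \neq \pm 1$) this image is all of $sl_{2n}$.

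To produce $\tau$ I would compose three standard automorphisms of $sl_{2n}$: conjugation $\mathrm{Ad}(P)$ by the block-swap permutation matrix $P$ exchanging the index $i$ with $n+i$, the Chevalley involution $\omega(E_{ij}) = -E_{ji}$, and a diagonal rescaling $\mathrm{Ad}(D)$ with $D = \mathrm{diag}(1,\ldots,1,-a^{-1},\ldots,-a^{-1})$ (first $n$ ones, last $n$ copies of $-a^{-1}$). Setting $\tau = \mathrm{Ad}(D) \circ \omega \circ \mathrm{Ad}(P)$, a case-by-case verification on generators shows that $\tau$ fixes $\psi_a(e_i) = \psi_{a^{-1}}(e_i)$ and $\psi_a(f_i) = \psi_{a^{-1}}(f_i)$ for $1 \le i \le n-1$, while for $i = n$ the diagonal twist is calibrated exactly so that $\tau(E_{n,n+1} + a^{-1}E_{1,2n}) = E_{n,n+1} + aE_{1,2n}$ and $\tau(E_{n+1,n} + aE_{2n,1}) = E_{n+1,n} + a^{-1}E_{2n,1}$, matching $\psi_{a^{-1}}(e_n)$ and $\psi_{a^{-1}}(f_n)$.

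Conceptually, the existence of $\tau$ reflects the fact that $\hat{\sg}_{fp}$ is the fixed subalgebra of an involution of $\hat{\sg}$ interchanging $t$ and $t^{-1}$ (twisted by an automorphism of $\sg$). Because $\theta^*(t) = (t-a)(t-a^{-1})$ is invariant under $t \leftrightarrow t^{-1}$, this involution descends to $\hat{\sg}/\theta^*(t) \cong sl_{2n} \oplus sl_{2n}$ and there acts by swapping the two factors up to an automorphism of $sl_{2n}$; its fixed subalgebra is the graph of $\tau$, and $\phi(\gim(M_n))$ lies inside it. The main obstacle is pinning $\tau$ down explicitly: the relations coming from $e_n$ and $f_n$ impose reciprocal constraints on the ratio of the diagonal entries of $D$, and the hypothesis $a \neq 0,\pm 1$ is precisely what guarantees these constraints admit a consistent, non-degenerate solution.
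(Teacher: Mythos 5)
Your proof is correct, and it implements the key symmetry differently from the paper. The paper works upstairs in the loop algebra: it introduces the automorphism $\sigma: x\ot t^m\mapsto x\ot t^{-m}$, notes $\psi_a\circ\sigma=\psi_{a^{-1}}$, writes $\psi_a\oplus\psi_{a^{-1}}=\psi_a\circ(1\oplus\sigma)$, and concludes from the "diagonal injectivity" of $x\mapsto(x,\sigma(x))$ that the joint image is isomorphic to $Im(\psi_a)$. You work downstairs in the target instead, producing an explicit automorphism $\tau=\mathrm{Ad}(D)\circ\omega\circ\mathrm{Ad}(P)$ of $sl_{2n}$ with $\psi_{a^{-1}}=\tau\circ\psi_a$ on the $\gim(M_n)$ generators; your calibration checks out (e.g. $E_{n,n+1}+a^{-1}E_{1,2n}\mapsto E_{2n,1}+a^{-1}E_{n+1,n}\mapsto -E_{1,2n}-a^{-1}E_{n,n+1}\mapsto aE_{1,2n}+E_{n,n+1}$, using $d^{-1}=-a$ for $d=-a^{-1}$), and since both $\psi_{a^{-1}}$ and $\tau\circ\psi_a$ are homomorphisms agreeing on generators they agree on all of $\gim(M_n)$, so the joint image is the graph of $\tau$ and the first projection is an isomorphism. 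Your route buys genuine rigor: the paper's two-line argument is loose as written, because $\sigma$ does not preserve $\hat{\sg}_{fp}$ (it sends $\hat e_n=E_{\alpha_n}\ot 1+E_{\alpha^\flat}\ot t^{-1}$ to $E_{\alpha_n}\ot 1+E_{\alpha^\flat}\ot t$, which lies outside the fixed-point subalgebra), and injectivity of $x\mapsto(x,\sigma(x))$ alone does not yield injectivity of the first projection on the image — one still needs $\ker\psi_a\cap L\subseteq\ker\psi_{a^{-1}}\cap L$ on the relevant subalgebra $L$, which is exactly what your graph-of-$\tau$ description supplies; in effect your $\tau$ is the evaluated shadow of Berman's involution, the finite twist the paper leaves implicit. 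Two small quibbles: in your conceptual paragraph the involution defining $\hat{\sg}_{fp}$ is not $t\leftrightarrow t^{-1}$ twisted by a $t$-independent automorphism of $\sg$ — the twist (the order-two rotation of the affine diagram) is itself $t$-dependent, which is precisely why your $\tau$ must depend on $a$ through $D$; and the hypothesis $a\neq\pm1$ is not needed for $\tau$ to exist (any $a\neq0$ works) but rather ensures the two evaluation points are actually distinct, matching the setting where this lemma is applied. Neither affects your generator-level verification, which stands on its own.
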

\begin{proof}
As we know that
\beqs \sigma: &&\sg\otimes \mc[t, t^{-1}]\rightarrow \sg\otimes \mc[t, t^{-1}]\\
&&x\otimes t^m\mapsto x\otimes t^{-m},\eeqs
is an isomorphism. Moreover,
$\psi_a\circ \sigma=\psi_{a^{-1}}$, hence we have that
\beqs \psi_{a}\oplus\psi_{a^{-1}}=\psi_a\circ(1\oplus\sigma),\eeqs
notice that $1\oplus\sigma$ is  diagonally injective, then the image of  $\psi_{a}+\psi_{a^{-1}}$ is isomorphic to $Im(\psi_a)$.
\end{proof}

{\noindent\bf Proof for Theorem 2.3:}\quad
Undoubtedly, since $\eta$ is a polynomial  in  variable $t+t^{-1}$ and $\theta^*(0)\not=0$,  we have that $(t-a)|\theta^*(t)$ if and only if $(t-a^{-1})|\theta^*(t)$.   By Lemma \ref{L6.4}, $\psi^\clubsuit=\psi_{\underline{a}}$. Here, $a_k\in {\underline{a}}$ if and only if $a_k^{-1}\in {\underline{a}}$. However, the approach of proof for Lemma \ref{L5.1} still play a major role in this case. Follow the process of the proof for Lemma \ref{L5.1} and Lemma \ref{L5.3}, we can successfully  reach that:
\beqs x_i^{[k]}, y_i^{[k]}\in Im(\psi_{\underline{a}}),&& a_k=\pm1, 1\leq i\leq n\\
x_i^{[k]}+x_i^{[j]}, y_i^{[k]}+y_i^{[j]}\in Im(\psi_{\underline{a}}),&& a_k\not=\pm1, a_ka_j=1, 1\leq i\leq n.\eeqs

By Proposition \ref{P4.2} and Lemma \ref{L6.5}, the subalgebra generated by $x_i^{[k]}, y_i^{[k]} (a_k=\pm1, 1\leq i\leq n)$ is one of $so_{2n}, sp_{2n}$, and the subalgebra generated by $x_i^{[k]}+x_i^{[j]}, y_i^{[k]}+y_i^{[j]} (a_ka_j=1, 1\leq i\leq n)$ is $sl_{2n}$. Since $t-1$ or $t+1$ appears in $\theta(t)$ at most one time, each of $so_{2n}$ and $sp_{2n}$ appears at most one time. Then $\gim(M_n)/I$ is of type $M(n, {\bf a}, {\bf c}, {\bf d})$.

The proof is completed.

\vskip.5cm

{\bf Acknowledgements. }

The first author is partially supported by NSERC of Canada. The second author is supported by the  NNSF of China (Grant No. 11001110, 11271131) and {\it Jiangsu Government Scholarship for Overseas Studies}.

\vskip.3cm

\end{document}